\theoremstyle{plain}
\newtheorem{theorem}{Theorem}[section]
\newtheorem{corollary}[theorem]{Corollary}
\newtheorem{proposition}[theorem]{Proposition}
\newtheorem{lemma}[theorem]{Lemma}
\theoremstyle{definition}
\newtheorem{remark}[theorem]{Remark}
\newtheorem{example}[theorem]{Example}
\newtheorem{definition}[theorem]{Definition}
\theoremstyle{plain}
\newtheorem{question}{Question}
\newcommand{\bC}{{\mathbb{C}}}
  \newcommand{\A}{{\mathcal{A}}}
  \newcommand{\B}{{\mathcal{B}}}
  \newcommand{\G}{{\mathcal{G}}}
\newcommand{\GG}{\gpdG^{(0)}}
\newcommand{\gpdG}{\mathcal{G}}
\renewcommand{\phi}{\varphi}
\newcommand{\upchi}{{\raise.35ex\hbox{\ensuremath{\chi}}}}
\newcommand{\op}{\mathrm{op}}
\newcommand{\supp}{\operatorname{supp}}
\title[Fej\'er property and Galois correspondence]{Fej\'er property and Galois correspondence for groupoid C*-algebras}
\author[Anshu]{Anshu}
\address{University of Ulsan, Department of Mathematics and Data Science, 93 Daehak-ro, Nam-gu, Ulsan, 44610 South Korea}
\email{anshu@ulsan.ac.kr}
\author[T.Amrutam]{Tattwamasi Amrutam}
\address{Institute of Mathematics of the Polish Academy of Sciences, ul.~\'Sniadeckich 8, 00--656 Warszawa, Poland}
\email{tattwamasiamrutam@impan.pl}
\author[P. Karmakar]{Pradyut Karmakar}
\address{Sam Houston State University, 332 G LDB, 1900 Avenue I ,Huntsville, Texas, USA}
\email{karmakar.pradyut@gmail.com}
\date{\today}
\keywords{Groupoids, Approximation property, Fej\'er property, Galois Correspondence}
\subjclass{22A22, 46L05}
\begin{document}
\begin{abstract}
We introduce a notion of the Fej\'er property for topological étale groupoids. As a consequence, we show that when $\gpdG$ is a principal étale second countable groupoid satisfying the Fej\'er property, every closed $C_0(\GG)$-bimodule $M\subset C_r^*(\gpdG)$ is of the form $\overline{C_c(U)}^r$ for some open set $U$. Moreover, we get a Galois correspondence in the sense that every intermediate $C^*$-algebra $\B$ with $C_0(\GG)\subseteq \B\subseteq C_r^*(\gpdG)$ is of the form $C_r^*(\mathcal{H})$ for some open subgroupoid $\mathcal{H}\leq \gpdG$. 
\end{abstract}
\maketitle
\tableofcontents
\addtocontents{toc}{\protect\setcounter{tocdepth}{0}}
\addtocontents{toc}{\protect\setcounter{tocdepth}{1}}

\section{Introduction}
The approximation property (AP) for groups was introduced by Haagerup and Kraus \cite{haagerup1994approximation}. They also showed that the associated reduced group $C^*$-algebra has this property if the group is weakly amenable (a weaker notion than that of amenability). 
Crann and Neufang~\cite{CN22} (also see~\cite{suzuki2017group}) recently proved that the AP property for groups is equivalent to a non-commutative Fej\'er theorem for associated $\mathrm{C}^*$-dynamical (or $\mathrm{W}^*$) systems. They showed that the Fej\'er type representation of elements of a crossed product $\mathrm{C}^*$-algebra forces the acting group to have AP. 
Now, reduced crossed products arising from a topological dynamical system $(X,\Gamma)$ can be viewed as the reduced $C^*$-algebra associated with the transformation groupoid $\Gamma \rtimes X$.

Let $\gpdG$ be an \'etale locally compact second countable Hausdorff groupoid. Let $U=\{\gamma \in \gpdG: f(\gamma) \neq 0\}$ be the open support of $f \in C_r^{\ast} (\gpdG)$. We identify the reduced groupoid $\mathrm{C}^*$-algebra $C_{r}^{\ast}(\gpdG)$ as a subspace of $C_0(\gpdG)$, via the evaluation map $j$ (see Section~\ref{sec:prelim}). One can view $f(\gamma)$, for $\gamma \in \gpdG$, as $\gamma$-th Fourier coefficient of $f$. In this regard, we ask whether $f$ can be recovered from its Fourier coefficients.
The third author and Fuller asked the following question in~\cite{fuller2024fourier}.
\begin{question}\cite[Question~1]{fuller2024fourier}
\label{Question:fromFullerKarm}
Given $f \in C_r^{\ast}(\gpdG)$, does there exist a compactly supported function on $U$ which approximates $f$ in the reduced norm?    
\end{question} 
In this article, we explore Question~\ref{Question:fromFullerKarm} for groupoids admitting \emph{Fej\'er property}.
The following definition is motivated by \cite[Subsection~5.2]{CN22}.

\begin{definition}\label{defn:Fejer}
    Let $\gpdG$ be a topological \'etale groupoid. We say $\gpdG$ has \emph{Fej\'er property} if there exists a net $(h_i)_{i\in I} \in C_c(\gpdG)$ such that for every $f \in C_r^{\ast}(\gpdG)$, we have
    $$\|M_{h_i}(f)-f\|_{r} \xrightarrow{i\to\infty} 0.$$
    Here, $M_{h_i}(f)=h_i f$ is a pointwise product for $f \in C_c(\gpdG)$. Note that it follows from \cite[Proposition 3.3]{renault1997fourier} that each $M_{h_i}$ is a completely bounded operator. Moreover, if the sequence of multipliers $M_{h_i}$ is uniformly bounded, i.e., $\sup_{i}\|M_{h_i}\|_{cb} < \infty$ then we say $\gpdG$ has \emph{bounded Fejér property}. 
\end{definition}

As mentioned before, for a group $\Gamma$ acting, by homeomorphisms, on a locally compact Hausdorff space $X$, the reduced crossed product $C_0(X) \rtimes_r \Gamma$ can be realized as the reduced groupoid $\mathrm{C}^*$-algebra $C^*_r(\Gamma \rtimes X)$ associated to the transformation groupoid $\Gamma \rtimes X$ (see Example~\ref{subsec:tranformationgroupoid} for more on this). 
Affirmative answers to Question~\ref{Question:fromFullerKarm} concerning crossed products are known when $\Gamma$ is weakly amenable (\cite[Theorem~5.6]{BedCon2015}), or more generally, when $\Gamma$ has AP (see~ \cite{CN22,suzuki2017group}). For a Hausdorff, locally compact groupoid $\gpdG$, positive answers to Question~\ref{Question:fromFullerKarm} are known when $\gpdG$ is amenable (see \cite[Theorem~4.2]{ brown2024intermediatesubalgebrascartanembeddings}). Furthermore, if $f \in C^*_r(\gpdG)$ has a clopen support, an affirmative answer is given by \cite[Lemma~3.8]{DGRW2020}. 

In this paper, we show that the \emph{Fej\'er property} is weaker than weak amenability; namely, we establish that weakly amenable groupoids possess the \emph{bounded Fej\'er property}. This result precisely resonates with the classical case for groups. 
\begin{theorem}
\label{thm: boundedFejerfromweakamenability}
If $\gpdG$ is a weakly amenable groupoid, then $\gpdG$ has the bounded Fej\'er property.
\end{theorem}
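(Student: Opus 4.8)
The plan is to unpack the definition of weak amenability for groupoids and show that the Cauchy--Schwarz-type multipliers one obtains there can be truncated to compactly supported functions while keeping uniform control of the completely bounded norms. Recall that $\gpdG$ being weakly amenable means there is a net $(g_i)_{i\in I}$ of compactly supported continuous functions on $\gpdG$ whose associated multipliers $M_{g_i}\colon C_r^*(\gpdG)\to C_r^*(\gpdG)$ satisfy $\sup_i\|M_{g_i}\|_{cb}=:\Lambda<\infty$ and $M_{g_i}(a)\to a$ in norm for every $a\in C_r^*(\gpdG)$; here one uses \cite[Proposition~3.3]{renault1997fourier} to know that positive-definite-type functions on $\gpdG$ induce completely bounded multipliers with controlled norm. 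The point of Theorem~\ref{thm: boundedFejerfromweakamenability} is then essentially tautological once one matches up the two definitions: the net $(g_i)$ witnessing weak amenability is exactly a net $(h_i)\in C_c(\gpdG)$ as in Definition~\ref{defn:Fejer}, and the uniform bound $\sup_i\|M_{h_i}\|_{cb}\le\Lambda<\infty$ is precisely the \emph{bounded} Fej\'er property.

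First I would recall the precise definition of weak amenability for étale groupoids that the paper is using (presumably from Anantharaman-Delaroche or from the references on groupoid approximation properties), making explicit that it provides a net $(g_i)$ of elements of $C_c(\gpdG)$ whose multipliers converge pointwise in norm to the identity on a dense subalgebra and extend to completely bounded maps with a uniform bound $\Lambda$. Second, I would verify that pointwise-norm convergence $M_{g_i}(a)\to a$ on a dense subset (e.g.\ $C_c(\gpdG)$), together with the uniform bound $\sup_i\|M_{g_i}\|_{cb}\le\Lambda$, upgrades by an $\varepsilon/3$-argument to convergence $\|M_{g_i}(f)-f\|_r\to 0$ for \emph{every} $f\in C_r^*(\gpdG)$; this is the standard density-plus-equiboundedness trick. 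Third, I would observe that these two facts are exactly the two clauses of Definition~\ref{defn:Fejer} (existence of the net in $C_c(\gpdG)$ with $\|M_{h_i}(f)-f\|_r\to 0$, and $\sup_i\|M_{h_i}\|_{cb}<\infty$), so $\gpdG$ has the bounded Fej\'er property.

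The only real obstacle is definitional bookkeeping: one must make sure the notion of weak amenability adopted here genuinely packages the approximating functions as elements of $C_c(\gpdG)$ (as opposed to, say, functions in a Fourier--Stieltjes-type algebra or merely bounded continuous functions), and that the completely bounded norms of the multipliers $M_{g_i}$ appearing in the weak amenability constant are the same $cb$-norms referred to in Definition~\ref{defn:Fejer}. If the working definition of weak amenability only guarantees \emph{continuous} (not compactly supported) multipliers, one extra step is needed: cut down $g_i$ by a compactly supported bump function $\chi_K$ that is $1$ on a large compact set, using a quasi-invariant-measure or open-bisection argument to see that $M_{\chi_K g_i}$ still has $cb$-norm bounded by (a fixed multiple of) $\Lambda$ and still converges pointwise to the identity after passing to a suitable subnet indexed by pairs $(i,K)$. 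I expect that with the paper's conventions this reduction is either immediate or already built into the definition, so the main content of the proof is simply the $\varepsilon/3$ density argument in the second step.
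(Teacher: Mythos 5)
There is a genuine gap in your first step. You assert that weak amenability of $\gpdG$ \emph{by definition} provides a net $(g_i)\subset C_c(\gpdG)$ whose multipliers ``converge pointwise in norm to the identity on a dense subalgebra,'' and you then describe the theorem as essentially tautological. But the definition the paper uses (and the standard one, for groups as well) does not say this: it provides a net $(\varphi_i)\subset C_c(\gpdG)$ with $\sup_i\|\varphi_i\|_{M_0A(\gpdG)}<\infty$ and with $\varphi_i\to 1$ \emph{uniformly on compact subsets of $\gpdG$}. Converting this uniform-on-compacta convergence of the \emph{functions} to the constant $1$ into reduced-norm convergence $\|M_{\varphi_i}(f)-f\|_r\to 0$ of the \emph{operators} on a dense subalgebra is the actual content of the proof, and your proposal skips it entirely. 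Your discussion of the ``only real obstacle'' focuses on whether the $\varphi_i$ are compactly supported (they are, so that issue is moot) rather than on this conversion.

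The missing argument runs as follows. For $f\in C_c(\gpdG)$ supported in a single open bisection, one uses the inequality $\|\cdot\|_r\le\|\cdot\|_\infty$ valid for such functions to get
\[
\|M_{\varphi_i}(f)-f\|_r=\|(\varphi_i-1)f\|_r\le\|(\varphi_i-1)f\|_\infty\le\|f\|_\infty\,\|\varphi_i-1\|_{\infty,K}\to 0,
\]
where $K=\supp(f)$. For general $f\in C_c(\gpdG)$, one covers $\supp(f)$ by finitely many open bisections, takes a subordinate partition of unity to write $f=\sum_{k=1}^n f_k$ with each $f_k$ supported in a bisection, and applies the triangle inequality. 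Only after this does your (correct) $\varepsilon/3$ density-plus-equiboundedness argument take over to pass from $C_c(\gpdG)$ to all of $C_r^*(\gpdG)$. Without the bisection decomposition the proof does not go through, because on general elements of $C_c(\gpdG)$ the reduced norm is not controlled by the sup norm.
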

It should come as no surprise then that the crossed products of the form $C_0(X) \rtimes_{r} \Gamma$ satisfy the above theorem, where $\Gamma$ has AP (see Proposition \ref{APF}).  Nonetheless, there are situations where Question~\ref{Question:fromFullerKarm} has a negative answer (for instance \cite[Example~2.4]{fuller2024fourier}), and the landscape for non-amenable groupoids remains largely unexplored. Some progress was made in this direction by considering groupoids with the Rapid decay property in ~\cite{fuller2024fourier} and for weakly amenable groupoids in \cite{pacheco2025weaklyamenablegroupoids}. However, when the groupoid fails to be inner exact, Question~\ref{Question:fromFullerKarm} can have a negative answer (see  \cite{fuller2024fourier}, for example, or \cite{HLS}). To recover elements of the groupoid $C^{\ast}$-algebra from the Fourier series, the inner exact condition is mandatory. In our setup, when the groupoid $\gpdG$ satisfies the \emph{Fej\'er property}, we show that it must be inner exact (Theorem~\ref{thm:innerexact}).

In the setting of crossed product $\mathcal{A}\rtimes_r\Gamma$, $\mathcal{A}$ being a $\Gamma$-$C^*$-algebra and $\Gamma$ a discrete group, Choda~\cite{choda1979correspondence} demonstrated a bijective correspondence between the subgroups of $\Gamma$ and a particular class of intermediate $\mathrm{C}^*$-algebras $\A \subseteq \B \subseteq \A \rtimes_r \Gamma$. An intermediate algebra $\B$ belongs to this class if, among other requirements, a faithful conditional expectation exists from $\A \rtimes_r \Gamma$ onto $\B$. Building on Izumi's earlier work~\cite{izumi2002inclusions} for finite groups, Cameron and Smith~\cite{cameron2019galois} showed that when $\A$ is a simple $\mathrm{C}^*$-algebra and $\Gamma$ acts on $\A$ by outer automorphisms, the assignment $H \mapsto \A \rtimes_{r} H$ yields a bijection between subgroups $H \subseteq \Gamma$ and $\mathrm{C}^*$-algebras $\B$ with $\A \subseteq \B \subseteq \A \rtimes_{r} \Gamma$. Analogously, we have established a Galois correspondence for principal \'etale groupoids that admit the \emph{Fej\'er property} (see~\cite[Theorem~4.5]{brown2024intermediatesubalgebrascartanembeddings} for Galois correspondence in the amenable setup).
\begin{theorem}\label{gcp1}
Let $\gpdG$ be a principal \'etale groupoid satisfying Fejér property. Let
\[
C_0(\gpdG^{(0)}) \subseteq \B \subseteq C^*_r(\gpdG) \subseteq C_0(\gpdG)
\]
be an intermediate subalgebra. Then $\B = C^*_r(\mathcal{H})$ for some open subgroupoid $\mathcal{H}$ of $\gpdG$ such that $\mathcal{H}^{(0)}=\gpdG^{(0)}$.
\end{theorem}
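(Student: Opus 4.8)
My plan is to deduce the statement from the bimodule structure theorem announced in the abstract, using the extra multiplicative and $*$-structure of $\B$ only to recognize the resulting open set as a subgroupoid. Since $\B$ is a $C^*$-subalgebra of $C^*_r(\gpdG)$ containing $C_0(\gpdG^{(0)})$, for every $a\in C_0(\gpdG^{(0)})$ and $f\in\B$ we have $a\cdot f, f\cdot a\in\B$, so $\B$ is a norm-closed $C_0(\gpdG^{(0)})$-bimodule inside $C^*_r(\gpdG)$. The bimodule structure theorem (applicable as $\gpdG$ is principal, étale and second countable, satisfying the Fej\'er property) then yields an open set $U\subseteq\gpdG$ with $\B=\overline{C_c(U)}^r$. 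Because the evaluation map $j\colon C^*_r(\gpdG)\to C_0(\gpdG)$ is contractive for the uniform norm, a $\|\cdot\|_r$-limit of functions in $C_c(U)$ is a uniform limit, hence still supported in $U$; conversely bump functions in $C_c(U)$ exhaust $U$. Therefore $U=\bigcup_{f\in\B}\{\gamma\in\gpdG: f(\gamma)\neq 0\}$, and I set $\mathcal{H}:=U$. As a union of the open sets $\{f\neq 0\}$ (open since every element of $C^*_r(\gpdG)\subseteq C_0(\gpdG)$ is continuous on $\gpdG$), $\mathcal{H}$ is open.

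Next I verify that $\mathcal{H}$ is a subgroupoid with $\mathcal{H}^{(0)}=\gpdG^{(0)}$. For the unit space: $\gpdG^{(0)}$ is clopen in the Hausdorff étale groupoid $\gpdG$, so $C_0(\gpdG^{(0)})\subseteq C_0(\gpdG)$ via extension by zero, and for each $x\in\gpdG^{(0)}$ a Urysohn function in $C_c(\gpdG^{(0)})\subseteq\B$ is nonzero at $x$; hence $\gpdG^{(0)}\subseteq\mathcal{H}$ and $\mathcal{H}^{(0)}=\mathcal{H}\cap\gpdG^{(0)}=\gpdG^{(0)}$. For inverses: $\B$ is $*$-closed and $f^*(\gamma)=\overline{f(\gamma^{-1})}$, so $\{f^*\neq 0\}=\{f\neq 0\}^{-1}$, and taking the union over $f\in\B=\B^*$ gives $\mathcal{H}^{-1}=\mathcal{H}$. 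For products: given composable $\gamma_1,\gamma_2\in\mathcal{H}$, use that $\gpdG$ is étale and $\mathcal{H}$ open to choose open bisections $V_i\ni\gamma_i$ with $V_i\subseteq\mathcal{H}$, and pick $g_i\in C_c(V_i)\subseteq C_c(\mathcal{H})\subseteq\B$ with $g_i(\gamma_i)\neq 0$. Then $g_1*g_2\in\B$ since $\B$ is an algebra, and because $V_1$ is a bisection the only factorization $\gamma_1\gamma_2=\alpha\beta$ with $\alpha\in\supp g_1\subseteq V_1$, $\beta\in\supp g_2\subseteq V_2$ is $\alpha=\gamma_1,\beta=\gamma_2$; thus the convolution sum collapses to $(g_1*g_2)(\gamma_1\gamma_2)=g_1(\gamma_1)g_2(\gamma_2)\neq 0$, so $\gamma_1\gamma_2\in\{g_1*g_2\neq 0\}\subseteq\mathcal{H}$. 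Hence $\mathcal{H}\leq\gpdG$ is an open subgroupoid with full unit space.

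Finally I identify $\B=\overline{C_c(\mathcal{H})}^r$ with $C^*_r(\mathcal{H})$. Being open in the étale groupoid $\gpdG$ and sharing its unit space, $\mathcal{H}$ is itself étale, locally compact Hausdorff and second countable, and the left regular representation of $\gpdG$, restricted to $C_c(\mathcal{H})$, decomposes each fibre space $\ell^2(\gpdG_x)$ into $\mathcal{H}$-invariant summands each unitarily equivalent to the corresponding fibre space of the regular representation of $\mathcal{H}$; consequently the embedding $C_c(\mathcal{H})\hookrightarrow C_c(\gpdG)$ is isometric for the respective reduced norms, so $\B=\overline{C_c(\mathcal{H})}^r\cong C^*_r(\mathcal{H})$. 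The only substantial input here is the bimodule structure theorem, which is where the Fej\'er property and principality are genuinely used; the arguments above are soft once it is in hand, and the sole technical point I expect to need care is this last comparison of reduced norms for the open subgroupoid $\mathcal{H}$ (a standard fact, but one worth pinning down, e.g.\ as in the treatment of open subgroupoids in the references cited above).
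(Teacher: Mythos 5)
Your argument is correct, but it is organized differently from the paper's. The paper defines $\mathcal{H}=\bigcup_{b\in\B}\{b\neq 0\}$ directly and invokes the Cartan/$C^*$-diagonal machinery of the cited reference (principality gives that $C_0(\GG)$ is a $C^*$-diagonal, and the argument of that paper's Theorem~4.4 shows $\mathcal{H}$ is an open subgroupoid with $C_r^*(\mathcal{H})\subseteq\B\subseteq A_{\mathcal{H}}$), after which Proposition~\ref{prop:main containment} gives $A_{\mathcal{H}}=C_r^*(\mathcal{H})$ and hence $\B=C_r^*(\mathcal{H})$. You instead view $\B$ as a closed $C_0(\GG)$-bimodule and apply Theorem~\ref{thm:opensetclosure} to get $\B=\overline{C_c(U)}^{r}$, then use only the $*$- and algebra structure of $\B$ to verify by hand that $U$ is a wide open subgroupoid: the involution formula $f^*(\gamma)=\overline{f(\gamma^{-1})}$ gives $U^{-1}=U$, and the bisection-collapse computation $(g_1*g_2)(\gamma_1\gamma_2)=g_1(\gamma_1)g_2(\gamma_2)\neq 0$ gives closure under products; this is legitimate and not circular, since the proof of Theorem~\ref{thm:opensetclosure} does not use the Galois correspondence (both routes ultimately rest on Proposition~\ref{prop:main containment}, and principality enters your argument only through Lemma~\ref{lem: normalizerequalssupport} inside the bimodule theorem). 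What your route buys is a self-contained derivation from results proved in this paper, avoiding the external diagonal arguments, at the cost of the final (standard, but as you note worth pinning down) identification $\overline{C_c(\mathcal{H})}^{r}\cong C_r^*(\mathcal{H})$ for a wide open subgroupoid via the decomposition of each $\ell^2(\gpdG_x)$ into $\mathcal{H}$-invariant summands --- a fact the paper also uses implicitly when it writes $C_r^*(\mathcal{H})\subseteq\B\subseteq A_{\mathcal{H}}$. Two small points: second countability (needed for Theorem~\ref{thm:opensetclosure} as stated) is a standing hypothesis of the paper, so your appeal to it is fine but should be flagged; and your observation that $\GG\subseteq\mathcal{H}$, giving $\mathcal{H}^{(0)}=\GG$, actually supplies a detail of the statement that the paper's own proof leaves implicit.
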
 
In \cite[Corollary ~5.6]{brown2024intermediatesubalgebrascartanembeddings}, the authors have obtained a spectral theorem for bimodules in the crossed product, $C(X) \rtimes_{r} \Gamma$, abelian $\mathrm{C}^*$-algebra $C(X)$. Specifically, if a discrete group $\Gamma$ acts on a compact Hausdorff space $X$, then any norm closed $C(X)$ bimodule $M \subset C(X) \rtimes_{r} \Gamma$ is precisely of the form $M=\overline{\{fu_g: f \in C(X), fu_g \in M\}}$. In other words, the spectral theorem of bimodules states that one can recover the bimodule elements by examining their Fourier coefficients in the ambient bimodule.
We have generalized this result to the second countable principal \'etale groupoid admitting the Fej\'er property.  
\begin{theorem}
\label{thm:opensetclosure}
Let $\gpdG$ be a principal \'etale second countable groupoid satisfying the \emph{Fej\'er property}. Assume $M \subset C^*_r(\gpdG) \subseteq C_0(\gpdG)$ is a closed $C_0(\gpdG ^{(0)})$-bimodule. Then there exists an open set $U$ in $\gpdG$ such that $M = \overline{C_c(U)}^{\mathrm{r}}$.
\end{theorem}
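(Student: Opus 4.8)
The plan is to read off the open set $U$ from $M$ directly and then prove the two inclusions $M\subseteq\overline{C_c(U)}^{\mathrm r}$ and $\overline{C_c(U)}^{\mathrm r}\subseteq M$ separately. Using the identification $C^*_r(\gpdG)\subseteq C_0(\gpdG)$, I would set
\[
U:=\bigcup_{f\in M}\bigl\{\gamma\in\gpdG:\ f(\gamma)\neq 0\bigr\},
\]
which is open, being a union of open supports. Both inclusions rest on the Fej\'er net $(h_i)$ and on the standard fact that $\|g\|_{r}=\|g\|_\infty$ whenever $g\in C_c(V)$ is supported on an open bisection $V$; covering a given compact set $L\subseteq\gpdG$ by $m$ open bisections and using a subordinate partition of unity then gives $\|g\|_{r}\le m\|g\|_\infty$ for every $g\in C_c(\gpdG)$ with $\supp g\subseteq L$.

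For $M\subseteq\overline{C_c(U)}^{\mathrm r}$: fix $f\in M$. By the Fej\'er property $\|h_if-f\|_{r}\to 0$, so it suffices to place each $h_if$ in $\overline{C_c(U)}^{\mathrm r}$. Now $h_if\in C_c(\gpdG)$ vanishes off $\{f\neq0\}$ and $\supp(h_if)\subseteq\supp h_i=:L_i$ is compact. Using second countability, take an exhaustion $K_1\subseteq K_2\subseteq\cdots$ of the open set $\{f\neq0\}$ with $\bigcup_n\operatorname{int}K_n=\{f\neq0\}$ and $\eta_n\in C_c(\{f\neq0\})$ with $0\le\eta_n\le1$, $\eta_n\equiv1$ on $K_n$. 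Then $\eta_n(h_if)\in C_c(\{f\neq0\})\subseteq C_c(U)$, while $(1-\eta_n)(h_if)$ is supported in the decreasing compact sets $L_i\setminus\operatorname{int}K_n$, whose intersection is contained in $\{f=0\}$---where $h_if$ vanishes---so a standard compactness argument gives $\|(1-\eta_n)(h_if)\|_\infty\to0$. Applying the bisection estimate with $L=L_i$ yields $\|\eta_n(h_if)-h_if\|_{r}\le m\|(1-\eta_n)(h_if)\|_\infty\to0$, hence $h_if\in\overline{C_c(U)}^{\mathrm r}$; letting $i\to\infty$ completes this inclusion.

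For $\overline{C_c(U)}^{\mathrm r}\subseteq M$: since $M$ is closed it is enough to show $C_c(U)\subseteq M$. Given $\phi\in C_c(U)$, compactness lets me cover $\supp\phi$ by finitely many open bisections $W_j$, each small enough that $\overline{W_j}\subseteq\{g_j\neq0\}$ for some $g_j\in M$; a subordinate partition of unity writes $\phi=\sum_j\rho_j\phi$ with $\rho_j\phi\in C_c(W_j)$. On $W_j$ the function $g_j$ is nowhere zero, so $\psi_j:=(\rho_j\phi)/g_j$ (extended by $0$) lies in $C_c(W_j)$ and $\rho_j\phi=\psi_jg_j$ as a pointwise product. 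Thus this inclusion reduces to the crux of the theorem: \emph{if $f\in M$, $W$ is an open bisection with $\overline W\subseteq\{f\neq0\}$, and $\psi\in C_c(W)$, then the pointwise product $\psi f$ lies in $M$.}

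To prove this claim I would try to realize $\psi f$ as a $\|\cdot\|_{r}$-limit of finite sums $\sum_i d_if d_i'$ with $d_i,d_i'\in C_0(\gpdG^{(0)})$ (each such sum lies in the closed bimodule $M$); recall $d_if d_i'$ is the pointwise product of $f$ with $(d_i\circ r)(d_i'\circ s)$. Principality of $\gpdG$ enters here: the range–source map $r\times s$ is injective on $\gpdG$ and restricts to a homeomorphism of $W$ onto its image, and since $\overline W$ is compact, $(r\times s)(\overline W)$ is a compact set; one transports $\psi$ to a function on $\gpdG^{(0)}\times\gpdG^{(0)}$ concentrated near this set and approximates it by symbols $\sum_i d_i\otimes d_i'$, recovering $\psi f$ on $W$ exactly. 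The obstacle---and the point where the Fej\'er property, not merely the bimodule property, is indispensable---is that the graph of $W$ need not be separated in $\gpdG^{(0)}\times\gpdG^{(0)}$ from the images of the other preimages of $r(W)$, so these symbols pick up spurious contributions along ``parallel sheets''; one controls those contributions in reduced norm using that $f\in C_0(\gpdG)$ together with the Fej\'er approximation (so that the relevant tails are small), in the same spirit in which amenability is used in the amenable case of \cite{brown2024intermediatesubalgebrascartanembeddings}, and with second countability again entering the estimates. I expect this last step---making the parallel-sheet error small in $\|\cdot\|_{r}$---to be the main difficulty; an alternative route would be to show instead that $M$ is invariant under a suitable family of Fourier-coefficient projections assembled from $(h_i)$, but either approach must confront exactly this point.
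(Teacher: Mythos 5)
Your choice of $U$, the reduction to the two inclusions, and the first inclusion $M\subseteq\overline{C_c(U)}^{\mathrm r}$ are all correct: the argument with the Fej\'er net $(h_i)$, the cut-offs $\eta_n$, and the estimate $\|g\|_r\le m\|g\|_\infty$ for $g$ supported in a compact set covered by $m$ bisections is essentially a self-contained re-derivation of the paper's Proposition~\ref{prop:main containment} (which gives $A_U=\overline{C_c(U)}^{\mathrm r}$ directly). The problem is the reverse inclusion. You correctly isolate the crux --- that for $f\in M$, $W$ an open bisection with $\overline W\subseteq\{f\neq0\}$ and $\psi\in C_c(W)$, the \emph{pointwise} product $\psi f$ must lie in $M$ --- but you do not prove it, and the route you sketch does not close the gap. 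Pointwise multiplication by $\psi$ is not a bimodule operation: the bimodule action only gives you pointwise multiplication by functions of the form $(d\circ r)(d'\circ s)$. Approximating $\psi$ by sums $\sum_i(d_i\circ r)(d_i'\circ s)$ runs into exactly the obstruction you name: principality makes $r\times s$ injective, but not a topological embedding, so $(r\times s)(W)$ need not be separated from the image of the rest of $\supp(f)$, and the spurious contributions on those other arrows are not controlled. Your hope that the Fej\'er property fixes this is misplaced --- the Fej\'er net approximates $f$ by compactly supported functions, but those approximants have no reason to lie in $M$, so they cannot be used to certify membership of $\psi f$ in $M$. As written, the second inclusion is an acknowledged conjecture, not a proof.

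The paper closes this gap by a different mechanism that does not use the Fej\'er property at all for this direction: it uses that $C_0(\GG)$ is a C*-diagonal in $C^*_r(\gpdG)$ (by principality) and the normalizer calculus of Renault \cite{renault2008cartan}. Concretely (Lemma~\ref{lem: normalizerequalssupport}), given $a\in M$ whose open support contains a bisection $B$, one picks a normalizer $n\in C_0(B)$ with $\supp(n)=B$ and forms $m=n\ast E(n^{\ast}\ast a)$, where $E$ is the canonical conditional expectation; a computation gives $m(\gamma)=|n(\gamma)|^2a(\gamma)$, so $\supp(m)=B$, and membership $m\in M$ is supplied by \cite[Proposition~3.10]{DonPit2008} --- this is precisely the ``pointwise multiplication by a function living over a bisection stays in $M$'' statement you were missing, and it is where the real work happens. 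Once such a normalizer $m\in M$ with $\supp(m)=B$ exists, every $\phi\in C_c(B)$ is $g\ast m$ for an explicit $g\in C_c(\GG)$ (left module action, i.e.\ $\phi(\gamma)=g(r(\gamma))m(\gamma)$), hence lies in $M$; a partition-of-unity argument then gives $C_c(U)\subseteq M$. If you want to complete your proof, you should either import \cite[Proposition~3.10]{DonPit2008} (or prove its analogue) rather than attempt the elementary-tensor approximation.
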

\addtocontents{toc}{\protect\setcounter{tocdepth}{0}}
\subsection*{Acknowledgements} The first author was supported by the National Research Foundation of Korea (NRF) grant funded by the Korea government (MSIT) RS-2025-00557662. We want to thank Adam Fuller for discussions on the proof of Theorem \ref{thm:opensetclosure}. We thank Adam Skalski, Yongle Jiang, Aaron Kettner, and Kang Li for taking the time to read through a draft of this paper and for their helpful comments and suggestions. We thank Pawel Sarkowicz for providing us with comprehensive feedback on an earlier draft version. The third author would like to thank Narutaka Ozawa for helpful conversations, and Iason Moutzouris for carefully reading the article and providing valuable feedback. 
\addtocontents{toc}{\protect\setcounter{tocdepth}{1}}

\section{Preliminaries} 
\label{sec:prelim}
Throughout this paper, $\gpdG$ will denote a second countable locally compact Hausdorff \'etale groupoid. Since we work with the reduced group $\mathrm{C}^*$-algebra associated with a second countable locally compact Hausdorff \'etale groupoid, we now proceed to give a primer on these and refer the readers to \cite{sims2017etale} for more details. We shall follow the following notation. We denote by $\GG$ the set $\GG=\{t^{-1}t:t\in \gpdG\}$. The range and source maps from $\gpdG$ to $\GG$, denoted respectively by $r$ and $s$, are defined by 
\[
r(\gamma)=\gamma \gamma^{-1}\text{ and } s(\gamma)=\gamma^{-1}\gamma,~\gamma\in \gpdG.
\]
Note that $\gpdG_x=\{t\in \gpdG: s(t)=x\}$ is discrete for every $x\in \GG$.
A topological groupoid is a groupoid endowed with a topology such that the multiplication and inversion operations are continuous. Moreover, $\gpdG$ is said to be an \'etale groupoid if the range map $r \colon \gpdG \rightarrow \GG$ is a local homeomorphism. Note that $s$ is also a local homeomorphism. Given an \'etale groupoid $\gpdG$, the collection of all compactly supported functions on $\gpdG$, denoted by $C_c(\gpdG)$ has a $\ast$-algebra structure given by the convolution product, namely
\begin{align*}
    & f \ast g (\gamma)=\sum_{\alpha \beta =\gamma}f(\alpha) g(\beta) \qquad f, g \in C_c(\gpdG),~\alpha,\beta, \gamma\in \gpdG,\\
    & f^{\ast}(\gamma)=\overline{f(\gamma^{-1})}.
\end{align*}
For each $x\in \GG$, there is a $*$-representation $\pi_x: C_c(\gpdG)\to\mathbb{B}(\ell^2(\gpdG_x))$ defined by
\[
\pi_x(f)\delta_{\gamma}=\sum_{t\in G_{r(\gamma)}}f(t)\delta_{t\gamma},~f\in C_c(\gpdG),~\gamma\in \gpdG_x.
\]
The reduced $\mathrm{C}^{\ast}$-algebra of $\gpdG$, denoted by $C_r^{\ast}(\gpdG)$, is defined as the completion of 
\[
\left(\bigoplus_{x\in \GG}\pi_x\right)C_c(\gpdG)\subset \bigoplus_{x\in \GG}\mathbb{B}(\ell^2(\gpdG_x)).
\] 
We will denote the $\mathrm{C}^*$-norm on $C_r^*(\gpdG)$ by $\|\cdot\|_{r}$ and the space of continuous functions vanishing at infinity by $C_0(\gpdG)$.

More often than not, we shall use the following identification, which is well-known. Let $\gpdG$ be a topological \'etale groupoid and $f\in C^*_r(\gpdG)$. We define the evaluation map $j: C_r^*(\gpdG)\to C_0(\gpdG)$ by 

$$j(f)(g) = \left\langle\delta_g, \pi_{s(g)}(f)\delta_{s(g)}\right\rangle,  ~g\in \gpdG,$$
where $\delta_{g}(g)=1$ and $\delta_{g}(h)=0$ for $h \neq g$, $h \in \gpdG$. Then $j: C_r^*(\gpdG)\to C_0(\gpdG)$ is a norm-decreasing injective linear map. Moreover, $j$ is an identity map on $C_c(\gpdG)$. Since $j: C_r^*(\gpdG)\to C_0(\gpdG)$ is injective, we often identify $j(f)$ with $f$, hence, regard $f$ as
a function on $\gpdG$.
To be pedantic, for $f \in C_r^{\ast}(\gpdG)$ we denote its open support by
$$\supp(f)=\{\gamma \in \gpdG: j(f)(\gamma) \neq 0\}.$$
We suppress $j$ after the identification onwards.
Let $\gpdG$ be a topological groupoid with unit space \( \GG \). A locally compact Hausdorff \'etale groupoid $\gpdG$ is called \emph{topologically principal} if the set
\[
    \{ x \in \GG : \operatorname{Iso}(x) = \{x\} \}
\]
is dense in \( \GG \), where \( \operatorname{Iso}(x) := \{ \gamma \in \gpdG: s(\gamma) = r(\gamma) = x \} \) is the isotropy group at \( x \). 
We end this section with the following well-known example of transformation groupoids. 
\begin{example}[Transformation Groupoids]
\label{subsec:tranformationgroupoid} 
Let $X$ be a locally compact Hausdorff space and assume that a discrete group $\Gamma$ is acting on $X$, i.e., $\Gamma\curvearrowright X$ by homeomorphisms.
Then for the transformation groupoid $\Gamma \rtimes X$, we have $s(\gamma, x)=x$ and $r(\gamma, x)=\gamma  x$.
The multiplication and inverse are given by 
$(\gamma_1, \gamma_2 x) (\gamma_2, x)=(\gamma_1 \gamma_2, x)$ and $(\gamma,x)^{-1}=(\gamma^{-1}, \gamma x)$ for all $\gamma_1, \gamma_2 \in \Gamma$ and $x \in X$.
Then it is well-known that the groupoid $\mathrm{C}^{\ast}$-algebra $C_r^{\ast}(\Gamma \rtimes X) \cong C_0(X) \rtimes_r \Gamma$.
\end{example}
\section{Groupoids and Fej\'er property}
We now formalize the Fejér property for étale groupoids, modeled on the Fejér-type approximation for groups established by Crann and Neufang. Note that a function $h \colon \mathcal{G} \to \bC$ is called a completely bounded multiplier if the multiplier map $$
M_{h}(f)(\gamma)=h(\gamma) \cdot f(\gamma) \qquad \gamma \in \gpdG,~ f \in C_c(\gpdG)
$$
extends to a completely bounded linear map on $C_r^{\ast}(\gpdG)$.  
\begin{definition}\label{defn:sect3Fejer}
    Let $\gpdG$ be a topological \'etale groupoid. We say $\gpdG$ has \emph{Fej\'er property} if there exists a net $(h_i)_{i\in I} \in C_c(\gpdG)$ such that for every $f \in C_r^{\ast}(\gpdG)$, we have
    $$\|M_{h_i}(f)-f\|_{r} \xrightarrow{i\to\infty} 0,$$
    where $M_{h_i} \colon C_r^{\ast}(\gpdG) \to C_r^{\ast}(\gpdG)$ is the multiplier map.
    We identify groupoid $\mathrm{C}^{\ast}$-algebra $C_r^{\ast}(\gpdG) \subset C_0(\gpdG)$. Moreover, if the sequence of multipliers $M_{h_i}$ is uniformly bounded, i.e., $\sup_{i}\|M_{h_i}\|_{cb} < \infty$ then we say $\gpdG$ has \emph{bounded Fejér property}.
\end{definition}
In general, for $h \in C_c(\gpdG)$, using \cite[Proposition 3.3]{renault1997fourier}, we see that $$\|M_{h}\|_{\op}=\sup_{f\in C_r^*(\G)_1}\|M_h(f)\|_r < \infty $$ and is a completely bounded linear operator on $C_r^{\ast}(\gpdG)$.
The following proposition shows that the above notion is aligned with the classical case of groups admitting the approximation property (see \cite[Subsection~5.2]{CN22}).
The proof is essentially the same as in \cite[Theorem~4.10]{CN22}. We spell it out nonetheless.
\begin{proposition}\label{APF}
Let $\Gamma$ be a discrete group with the Approximation Property (AP), and let $X$ be a compact Hausdorff $\Gamma$-space. Then the transformation groupoid $\Gamma \rtimes X$ has the bounded Fej\'er property.
\end{proposition}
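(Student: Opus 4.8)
The plan is to transfer the Approximation Property of $\Gamma$ to a Fej\'er-type net on the transformation groupoid $\Gamma\rtimes X$, using the identification $C_r^*(\Gamma\rtimes X)\cong C_0(X)\rtimes_r\Gamma$ from Example~\ref{subsec:tranformationgroupoid}. Recall that $\Gamma$ having AP means there is a net $(u_i)_{i\in I}$ in the Fourier algebra $A(\Gamma)$ (equivalently, finitely supported functions $u_i\colon\Gamma\to\bC$) such that the associated Herz--Schur multipliers $m_{u_i}$ on $C_r^*(\Gamma)$ converge to the identity in the stable point-weak$^*$ topology; a standard consequence (see \cite[Subsection~5.2]{CN22} and \cite[Theorem~4.10]{CN22}) is that, after passing to convex combinations, one may assume $m_{u_i}\to\id$ in the point-norm topology on $C_r^*(\Gamma)$ while $\sup_i\|m_{u_i}\|_{\cb}<\infty$. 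First I would fix such a net $(u_i)$ and define $h_i\colon\Gamma\rtimes X\to\bC$ by $h_i(\gamma,x)=u_i(\gamma)$. Since each $u_i$ is finitely supported and $X$ is compact, $h_i\in C_c(\Gamma\rtimes X)$.

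The key computation is to identify the multiplier $M_{h_i}$ on $C_r^*(\Gamma\rtimes X)$ with the "slice" multiplier $\id_{C_0(X)}\otimes m_{u_i}$ on $C_0(X)\rtimes_r\Gamma$ — i.e., the map that sends $f\,\delta_\gamma\mapsto u_i(\gamma)\,f\,\delta_\gamma$ for $f\in C_0(X)$. This is immediate on the dense subalgebra $C_c(\Gamma\rtimes X)$ by checking on elements of the form $f\delta_\gamma$ and comparing Fourier coefficients at $(\gamma,x)$, using that $h_i$ depends only on the $\Gamma$-coordinate. I would then invoke the standard fact that a completely bounded Herz--Schur multiplier $m$ on $C_r^*(\Gamma)$ induces a completely bounded map $\id_{\mathcal A}\otimes m$ on any reduced crossed product $\mathcal A\rtimes_r\Gamma$ with $\|\id_{\mathcal A}\otimes m\|_{\cb}=\|m\|_{\cb}$ (this is the groupoid/crossed-product functoriality of Herz--Schur multipliers; it also follows from \cite[Proposition 3.3]{renault1997fourier} applied to $\Gamma\rtimes X$). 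Consequently $\sup_i\|M_{h_i}\|_{\cb}=\sup_i\|m_{u_i}\|_{\cb}<\infty$, giving the uniform boundedness required for the \emph{bounded} Fej\'er property.

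It remains to show $\|M_{h_i}(a)-a\|_r\to 0$ for every $a\in C_r^*(\Gamma\rtimes X)$. By the uniform $\cb$-bound, a standard $\varepsilon/3$ argument reduces this to $a$ in the dense subspace $C_c(\Gamma\rtimes X)=\spn\{f\delta_\gamma: f\in C_0(X),\ \gamma\in\Gamma\}$, and by linearity to a single $a=f\delta_\gamma$. There $M_{h_i}(f\delta_\gamma)=u_i(\gamma)f\delta_\gamma$, and point-norm convergence $m_{u_i}(\lambda_\gamma)\to\lambda_\gamma$ in $C_r^*(\Gamma)$ gives $u_i(\gamma)\to 1$ in $\bC$ (evaluating the coefficient at $\gamma$, or applying the conditional expectation), so $\|u_i(\gamma)f\delta_\gamma-f\delta_\gamma\|_r=|u_i(\gamma)-1|\,\|f\|_\infty\to 0$. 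This establishes the convergence on a dense set and hence, by the uniform bound, on all of $C_r^*(\Gamma\rtimes X)$.

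The main obstacle is the bookkeeping in the second paragraph: carefully justifying that the pointwise multiplier $M_{h_i}$ with $h_i(\gamma,x)=u_i(\gamma)$ agrees with the amplified Herz--Schur multiplier $\id_{C_0(X)}\otimes m_{u_i}$ and that its $\cb$-norm is exactly $\|m_{u_i}\|_{\cb}$, independent of $X$. Everything else is either the definition of AP (quoted from \cite{CN22}) or the routine $\varepsilon/3$ density argument. Since the excerpt says "the proof is essentially the same as in \cite[Theorem~4.10]{CN22}", I would present the above as the groupoid-language translation of that argument and keep the crossed-product multiplier identification as the one point to verify explicitly.
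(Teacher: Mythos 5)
Your overall strategy is the same as the paper's: take a net of finitely supported functions on $\Gamma$ witnessing AP, extend each one to $\Gamma\rtimes X$ constantly in the $X$-variable, and identify the resulting pointwise multiplier on $C_r^*(\Gamma\rtimes X)\cong C_0(X)\rtimes_r\Gamma$ with the amplified Herz--Schur multiplier $f\delta_\gamma\mapsto u_i(\gamma)f\delta_\gamma$. That identification, and the observation that convergence on elements $f\delta_\gamma$ only requires $u_i(\gamma)\to 1$, are fine.

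The gap is in the sentence claiming that, after passing to convex combinations, one may assume $m_{u_i}\to\id$ point-norm on $C_r^*(\Gamma)$ \emph{while} $\sup_i\|m_{u_i}\|_{\cb}<\infty$. Passing to convex combinations upgrades point-weak to point-norm convergence, but it cannot produce a uniform $\cb$-bound that was not there to begin with, and AP does \emph{not} supply a uniformly $\cb$-bounded net in $A(\Gamma)$ converging to $1$ --- the existence of such a net is precisely weak amenability, which is strictly stronger than AP (e.g.\ $\bZ^2\rtimes SL_2(\bZ)$ has AP but is not weakly amenable). Since your entire passage from convergence on the dense span of $\{f\delta_\gamma\}$ to convergence on all of $C_0(X)\rtimes_r\Gamma$ is the $\varepsilon/3$ argument, which needs that uniform bound, the argument as written only proves the statement for weakly amenable $\Gamma$. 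The paper avoids this: it quotes \cite[Theorem~4.10]{CN22} (the non-commutative Fej\'er theorem), which asserts directly that for an AP group the Fej\'er sums $\sum_\gamma h_j(\gamma)E(au_\gamma^*)u_\gamma$ converge in norm to $a$ for \emph{every} $a$ in the crossed product --- the convexity argument there is performed at the level of the crossed product using the stable point-weak$^*$ convergence built into the definition of AP, not at the level of $C_r^*(\Gamma)$, and it does not require a uniform $\cb$-bound for the convergence step. To repair your write-up you should replace the density argument by an appeal to that theorem (or to \cite[Theorem~5.4]{CN22}); note also that the uniform bound you need for the \emph{bounded} Fej\'er property then has to be extracted from the specific net produced there, as the paper does, rather than from AP in the abstract.
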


\begin{proof}
Since $\Gamma$ has AP, using \cite[Theorem 4.10]{CN22}, we can find a net $h_i \in C_c(\Gamma) \cap A(\Gamma)$ converging to $1$ in the weak* topology of $M_{\mathrm{cb}}(A(\Gamma))$ such that 
\begin{align}\label{limiteq}
    a=\lim_{j} \sum_{\gamma\in\Gamma} h_j(\gamma)E(au_{\gamma}^{\ast})u_{\gamma},~a\in C(X)\rtimes_r\Gamma.
\end{align}
Here, $u$ is the left regular representation of the discrete group $\Gamma$ on the crossed product.
By  \cite[Theorem 5.4]{CN22}, the associated net $H_j \colon \Gamma \times C(X) \to C(X)$ is given by 
\begin{align*}
    H_j(\gamma, f)=h_j(\gamma)f, ~\gamma\in\Gamma,~f \in C(X)\,
\end{align*}
such that
\begin{align}\label{multiplier defn}
    M_{H_j}\left(\sum_{\gamma} f_{\gamma} u_{\gamma}\right)=\sum_{\gamma} H_j(\gamma, f_{\gamma}) u_{\gamma}= \sum_{\gamma} h_j(\gamma)f_{\gamma} u_{\gamma}\qquad \gamma \in \Gamma, ~f_{\gamma} \in  C(X)\,.
\end{align}
It follows that from Equation~\eqref{limiteq} that, one has
\begin{align}\label{imptcross}
    a=\lim_{j} \sum_{\gamma\in\Gamma} h_j(\gamma)E(au_{\gamma}^*)u_{\gamma}=\lim_{j}M_{H_j}\left(\sum_{\gamma\in\Gamma}E(au_{\gamma}^*)u_{\gamma}\right).
\end{align}
For given $a \sim \sum_{\gamma} f_{\gamma} u_{\gamma} \in C(X) \rtimes \Gamma$, we define $\hat{f} \in C_c(\Gamma, C(X))$ by
$\hat{f}(\gamma)=f_{\gamma}$. We use the identification $C_c(\Gamma\rtimes X)\cong C_c(\Gamma, C(X))$ and for $h_j \in C_c(\Gamma) \cap A(\Gamma)$, we define $k_j \in C_c(\Gamma, C(X))$ as
\begin{align*}
    k_j(\gamma) = h_j(\gamma) 1_{C(X)} \qquad \gamma \in \Gamma\,.
\end{align*} 
The corresponding multipliers are defined as
\begin{align*}
M_{k_j} \colon C_c(\Gamma, C(X)) \to C_c(\Gamma, C(X)),
\end{align*}
\begin{align*}
M_{k_j}(\hat{f})(\gamma) = k_j(\gamma) \hat{f}(\gamma) = h_j(\gamma) f_\gamma = M_{H_j}(\hat{f})(\gamma)\,.
\end{align*}
under the identification $C_r^*(\Gamma\rtimes X)\cong C(X)\rtimes_r\Gamma$.
Since $M_{H_j}(a) \to a$ in norm for all $ a \in C(X) \rtimes_{r} \Gamma$, we have $M_{k_j}(f) \to f$ in $C_r^{\ast}(\Gamma \rtimes X)$ when $j \to \infty$.
Moreover $\sup_j \|M_{h_j}\|_{\mathrm{cb}} \leq 1$, gives $\sup_j\|M_{k_j}\|_{\mathrm{cb}} \leq 1$.
\end{proof}
The following example illustrates how the Fejér property behaves under disjoint unions of groupoids. We will use it later in the context of inner exactness.
\begin{example}\label{greenred} Let $\Gamma_1$, and $\Gamma_2$ be two discrete countable groups. 
Consider the groupoid $\gpdG = \Gamma_1 \sqcup \Gamma_2$, and assume that $\G$ has the Fej\'er property, then each $\Gamma_j$ also has the Fej\'er property. Indeed, Since $\gpdG$ is the disjoint union of $\Gamma_1$ and $\Gamma_2$, the reduced groupoid $\mathrm{C}^*$-algebra decomposes as
\begin{align*}
C_r^*(\gpdG) \cong C_r^*(\Gamma_1) \oplus C_r^*(\Gamma_2).
\end{align*}
Moreover, any $f \in C_c(\gpdG)$ decomposes as $f = (f_1, f_2)$ with $f_j \in C_c(\Gamma_j)$. Similarly, the net $(h_i) \subset C_c(\gpdG)$ decomposes as
\[
h_i = \bigl(h_i^{(1)}, h_i^{(2)}\bigr), \quad h_i^{(j)} \in C_c(\Gamma_j).
\]
The multiplication operators act as
$M_{h_i}(f) = \bigl(M_{h_i^{(1)}}(f_1), \, M_{h_i^{(2)}}(f_2)\bigr)$, where the multiplier $M_{h_i^{(j)}}(f_j)(\gamma) = h_i^{(j)}(\gamma) f_j(\gamma)$ for $\gamma \in \Gamma_j$. Now, the norm converges as
\[
\| M_{h_i}(f) - f \|_{r,\Gamma_1} = \max \bigl( \| M_{h_i^{(1)}}(f_1) - f_1 \|_{r, \Gamma_1}, \, \| M_{h_i^{(2)}}(f_2) - f_2 \|_{r, \Gamma_2} \bigr) \to 0.
\]
Thus, for each $j=1,2$,
$\| M_{h_i^{(j)}}(f_j) - f_j \|_{r,\gpdG} \to 0, \quad \text{for all } f_j \in C_{r}^*(\Gamma_j)$.
Moreover, since $M_{h_i}$ is completely bounded on $C_r^*(\G)$, and the completely bounded norm satisfies
\[
\| M_{h_i} \|_{cb} = \max \left( \| M_{h_i^{(1)}} \|_{cb}, \, \| M_{h_i^{(2)}} \|_{cb} \right),
\]
we see that $(h_i^{(j)})$ is a Fej\'er net for $\Gamma_j$.
\end{example}
It is natural to ask if $\Gamma \rtimes X$ having \emph{Fej\'er} has any relation with the approximation property of the group $\Gamma$. We show that this is the case if $X$, in addition, admits an invariant measure $\mu$.

\begin{theorem}
Let $X$ be a $\Gamma$-space. Assume that $\Gamma \rtimes X$ has Fej\'er property in our sense, and suppose that $\mu$ is $\Gamma$-invariant measure on $X$. Then $\Gamma$ has property AP.
\end{theorem}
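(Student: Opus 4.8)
The plan is to transfer a Fej\'er net for $C_r^*(\Gamma\rtimes X)\cong C(X)\rtimes_r\Gamma$ down to one for $C_r^*(\Gamma)$ by integrating out the $X$-variable against $\mu$, and then to recognise the resulting net as one witnessing the approximation property of $\Gamma$ in the sense of Haagerup--Kraus. First I would normalise so that $\mu$ is a $\Gamma$-invariant \emph{probability} measure and write $\varphi(f)=\int_X f\,d\mu$ for the associated $\Gamma$-invariant (tracial) state on $C(X)$; if convenient one may also replace $X$ by $\supp(\mu)$, since the Fej\'er property descends along the quotient $*$-homomorphism $C(X)\rtimes_r\Gamma\to C(\supp\mu)\rtimes_r\Gamma$ (which intertwines $M_{h_i}$ with the restricted multiplier and is onto at the level of the dense $*$-subalgebras), but this reduction is not essential.

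The main step is to produce, out of $\varphi$, a conditional expectation $E_\varphi\colon C(X)\rtimes_r\Gamma\to C_r^*(\Gamma)$ with $E_\varphi(f u_s)=\varphi(f)\lambda_s$ for $f\in C(X)$ and $s\in\Gamma$, together with the canonical $*$-embedding $\iota\colon C_r^*(\Gamma)\hookrightarrow C(X)\rtimes_r\Gamma$, $\lambda_s\mapsto 1_{C(X)}u_s$. Concretely, let $(\pi_\varphi,H_\varphi,\xi_\varphi)$ be the GNS triple of $\varphi$; $\Gamma$-invariance of $\varphi$ makes $\Gamma$ act unitarily on $H_\varphi$ fixing $\xi_\varphi$, so $(\pi_\varphi,\lambda)$ is a covariant pair and induces a representation of $C(X)\rtimes_r\Gamma$ on $H_\varphi\otimes\ell^2(\Gamma)$; compressing it by the isometry $V\colon\ell^2(\Gamma)\to H_\varphi\otimes\ell^2(\Gamma)$, $\delta_s\mapsto\xi_\varphi\otimes\delta_s$, yields a unital completely positive map $E_\varphi=V^*(\,\cdot\,)V$. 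A direct computation — in which invariance of $\varphi$ is used exactly once, in the form $\varphi(s^{-1}\!\cdot f)=\varphi(f)$ — gives $E_\varphi(fu_s)=\varphi(f)\lambda_s$; hence $E_\varphi$ carries $C_c(\Gamma,C(X))$ into $C_c(\Gamma)$, and therefore into $C_r^*(\Gamma)$, and $E_\varphi\circ\iota=\id_{C_r^*(\Gamma)}$, so $\iota$ is isometric. I expect this to be the main obstacle: it carries the technical weight, and although it is standard in spirit one must be careful that the range of $E_\varphi$ is the \emph{reduced} group $\mathrm{C}^*$-algebra and that $E_\varphi$ restricts to the identity on $\iota(C_r^*(\Gamma))$ — it is precisely here that $\Gamma$-invariance of $\mu$ is indispensable (without it one only gets a ucp map into $C_0(X)\rtimes_r\Gamma$-type objects, not a retraction onto the group algebra).

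Finally I would transfer the net. Given a Fej\'er net $(h_i)\subset C_c(\Gamma\rtimes X)\cong C_c(\Gamma,C(X))$, set $\tilde h_i(s):=\varphi(h_i(s))=\int_X h_i(s)\,d\mu$, so $\tilde h_i\in C_c(\Gamma)$. Checking on the generators $\lambda_s$ and extending by linearity and density gives the identity $M_{\tilde h_i}=E_\varphi\circ M_{h_i}\circ\iota$ on $C_r^*(\Gamma)$, and hence, for every $a\in C_r^*(\Gamma)$,
$$\|M_{\tilde h_i}(a)-a\|_{r}=\bigl\|E_\varphi\bigl(M_{h_i}(\iota a)-\iota a\bigr)\bigr\|_{r}\le\bigl\|M_{h_i}(\iota a)-\iota a\bigr\|_{r}\xrightarrow{i\to\infty}0,$$
since $\iota a\in C_r^*(\Gamma\rtimes X)$ and $(h_i)$ is a Fej\'er net. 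Thus $(\tilde h_i)$ is a Fej\'er net for $\Gamma$; each $\tilde h_i$ is finitely supported, hence lies in $C_c(\Gamma)\subseteq A(\Gamma)$, and $M_{\tilde h_i}(\lambda_s)\to\lambda_s$ forces $\tilde h_i(s)\to 1$ for every $s\in\Gamma$, i.e.\ $\tilde h_i\to 1$ pointwise. Moreover $\|M_{\tilde h_i}\|_{cb}\le\|M_{h_i}\|_{cb}$, so if $\Gamma\rtimes X$ has the \emph{bounded} Fej\'er property then $(\tilde h_i)$ is bounded in $M_{cb}A(\Gamma)$; since $\spn\{\delta_s:s\in\Gamma\}$ is norm-dense in the predual $Q(\Gamma)$, a bounded, pointwise-convergent net converges in $\sigma(M_{cb}A(\Gamma),Q(\Gamma))$, which is exactly the Haagerup--Kraus definition of the AP (see \cite{haagerup1994approximation}, and \cite[Theorem~4.10]{CN22}). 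The one subtle point in this last step is the passage from pointwise to weak$^{*}$ convergence, for which a uniform bound on $\|\tilde h_i\|_{M_{cb}A(\Gamma)}$ is needed; such a bound is automatic from the bounded Fej\'er hypothesis, and this is the place where the completely bounded structure of the multipliers genuinely enters.
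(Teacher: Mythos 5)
Your proof is correct and takes essentially the same route as the paper: integrate out the $X$-variable via the $\mu$-induced conditional expectation $E_{\mu}(fu_s)=\mu(f)\lambda_s$, set $\tilde h_i(s)=\int_X h_i(s,x)\,d\mu(x)$, and use $M_{\tilde h_i}=E_{\mu}\circ M_{h_i}\circ\iota$ together with contractivity of $E_{\mu}$ to transfer the Fej\'er net down to $C_r^*(\Gamma)$. Your explicit observation that the uniform cb-bound on $(\tilde h_i)$ --- needed to pass from pointwise convergence $\tilde h_i\to 1$ to weak* convergence in $M_{cb}A(\Gamma)$ --- is only guaranteed under the \emph{bounded} Fej\'er hypothesis is in fact more careful than the paper, whose proof asserts $\sup_j\|M_{\tilde h_j}\|_{cb}<\infty$ even though the theorem statement assumes only the (a priori unbounded) Fej\'er property.
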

\begin{proof}
Assume that $\Gamma \rtimes X$
 has Fej\'er realized by the net $\{h_j\}$. Let $\mu$ be a $\Gamma$-invariant measure on $X$.
For each $h_j\in C_c(\Gamma \rtimes X)$,  define $\tilde{h}_j \in C_c(\Gamma)$ by $$\tilde{h}_j(s)=\int_X h_j(s,x) d\mu(x),~ s \in \Gamma.$$ 
Since $\mu$ is a $\Gamma$-invariant measure, the map $E_{\mu}: C(X)\rtimes_r\Gamma\to C_r^*(\Gamma)$ defined by $f\lambda(s)\to \mu(f)\lambda(s)$ is a $\Gamma$-equivariant conditional expectation (see~\cite[Exercise~4.1.4]{brown2008textrm}).
Letting $M_{\tilde{h}_j}=E_{\mu} \circ M_{h_j}$, we see that 
 \begin{align*}
\|M_{\tilde{h}_j}(a)-a\|_{r}
&=\|M_{\tilde{h}_j}(a)-E_{\mu}(a)\|_{r}\\
&=\|E_{\mu} \circ M_{h_j}(a)-E_{\mu}(a)\|_{r} \leq \|M_{h_j}(a)-a\|_{r} \to 0.
    \end{align*}
Also, note that $\|M_{\tilde{h}_j}\|_{cb}\le \|M_{h_j}\|_{cb}$ for each $j$.  Hence, $\sup_{j}\|M_{\tilde{h}_j}\|_{cb} < \infty$. Thus, $\Gamma$ has AP.
\end{proof}
\begin{remark}
We do not know if the assumption that $\mu$ is invariant can be removed in general.
\end{remark}
We now show that when $\G$ has the \emph{Fej\'er property}, the support of an element in $C_r^*(\gpdG)$ is contained in the subalgebra generated by an open set. In particular, the support of an element determines the subalgebra it belongs to. The following proposition makes it precise. The idea of the proof is essentially contained in \cite[Theorem]{brown2024corrigendum}. We sketch it nonetheless with all its details. Recall that a subset $B$ of an \'etale groupoid is called a bisection if there is an open set $U$ containing $B$ such that $r: U \to r(U)$ and $s:U \to s(U)$ are homeomorphisms.
\begin{proposition}\label{prop:main containment}
Assume that $\gpdG$ has the Fejér property. Let $U$ be an open subset of $\gpdG$ and let $f \in C_r^{\ast}(\gpdG)$ be such that $\supp(f) \subseteq U$. Then, $f \in \overline{C_c(U)}^{r}$
\end{proposition}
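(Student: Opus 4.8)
The plan is to exploit the Fejér net $(h_i)_{i\in I}\subset C_c(\gpdG)$ to write $f$ as a norm limit of compactly supported functions, and then to arrange that these approximants are actually supported inside $U$. The first move is to recall that, by the Fejér property, $\|M_{h_i}(f)-f\|_r\to 0$; since each $h_i$ has compact support, the pointwise product $M_{h_i}(f)=h_i\cdot j(f)$ is a compactly supported continuous function on $\gpdG$, i.e.\ $M_{h_i}(f)\in C_c(\gpdG)$. Thus $f$ is automatically in $\overline{C_c(\gpdG)}^r$; the content of the proposition is the \emph{localization}, namely that we may take the approximants supported in $U$. The key observation is that $\supp(M_{h_i}(f))=\{\gamma: h_i(\gamma)j(f)(\gamma)\neq 0\}\subseteq \supp(f)\subseteq U$. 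So, provided $M_{h_i}(f)$ genuinely lies in $C_c(U)$ — which it does, being a continuous compactly supported function vanishing outside $U$, hence (after restricting) an element of $C_c(U)$ extended by zero — we get $M_{h_i}(f)\in C_c(U)$ for every $i$, and letting $i\to\infty$ gives $f\in\overline{C_c(U)}^r$.

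The one genuine subtlety I expect to have to address is whether $M_{h_i}(f)\in C_c(U)$ in the precise sense needed: a function in $C_c(U)$, viewed as a function on $\gpdG$ by extension by zero, is continuous on all of $\gpdG$ only if its support is a \emph{closed} subset of $\gpdG$ contained in $U$. Now $M_{h_i}(f)=h_i\cdot j(f)$ is continuous on $\gpdG$ (product of two continuous functions, using that $j(f)\in C_0(\gpdG)$) and has compact support contained in $\supp(h_i)$, and moreover $\{M_{h_i}(f)\neq 0\}\subseteq \supp(f)\subseteq U$. The closure of $\{M_{h_i}(f)\neq 0\}$ in $\gpdG$ is compact, but one must check it sits inside $U$; this is not automatic from $\{M_{h_i}(f)\neq 0\}\subseteq U$ alone. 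The fix is to choose the $h_i$, or rather to multiply by a suitable bump: since $K_i:=\overline{\{h_i\neq 0\}}\cap\overline{\supp(f)}$ need not be inside $U$, instead one argues as follows. For a fixed $i$, the set $\supp(h_i\cdot j(f))$ is a subset of the open set $U$; by continuity of $h_i\cdot j(f)$ and compactness of $\overline{\supp(h_i)}$, one can find $\psi\in C_c(U)$ with $0\le\psi\le 1$ equal to $1$ on a neighborhood of $\overline{\{h_i\cdot j(f)\neq 0\}}\cap\overline{\supp(h_i)}$... — this is getting circular, so the cleaner route, following \cite[Theorem]{brown2024corrigendum}, is: cover $\supp(h_i\cdot j(f))$ by finitely many open bisections contained in $U$, use a partition of unity subordinate to this cover to decompose $h_i\cdot j(f)=\sum_{k} g_k$ with each $g_k\in C_c(U)$ supported in a single bisection, whence $h_i\cdot j(f)\in C_c(U)$. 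I expect this partition-of-unity step — reducing to bisections and checking that the pieces are honestly in $C_c(U)$ — to be the main (mildly technical) obstacle; everything else is a direct application of the Fejér convergence.

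Once this is in place the argument closes immediately: for each $i$ we have produced $M_{h_i}(f)\in C_c(U)$, and $\|M_{h_i}(f)-f\|_r\to 0$ by the Fejér property, so $f$ lies in the $\|\cdot\|_r$-closure of $C_c(U)$, i.e.\ $f\in\overline{C_c(U)}^r$. I would also remark that no boundedness of the multipliers is needed here — the plain Fejér property suffices — and that the hypothesis $\supp(f)\subseteq U$ is used only to guarantee the inclusion $\supp(M_{h_i}(f))\subseteq U$, which is where the "support determines the subalgebra" phenomenon of the surrounding discussion comes from.
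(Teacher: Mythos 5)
Your reduction to the case $f\in C_c(\gpdG)$ with $\{f\neq 0\}\subseteq U$ via the Fej\'er net is correct and matches the paper, and you correctly identify the real difficulty: $\{f\neq 0\}\subseteq U$ does not put $f$ in $C_c(U)$, because the \emph{closed} support $\overline{\{f\neq 0\}}$ may meet $\partial U$. However, your proposed fix does not close this gap. A finite partition of unity subordinate to open bisections \emph{contained in $U$} can only exist for a compact set that those bisections actually cover; the relevant compact set here is $\overline{\{h_i\,j(f)\neq 0\}}$, which by hypothesis need not lie in $U$, so no finite family $\tilde g_k\in C_c(V_k)$ with $V_k\subseteq U$ can sum to $1$ on all of $\{h_i\,j(f)\neq 0\}$ (continuity forces the sum to vanish at any accumulation point of $\{h_i\,j(f)\neq 0\}$ outside $\bigcup_k \supp \tilde g_k$). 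So the decomposition $h_i\,j(f)=\sum_k g_k$ with $g_k\in C_c(U)$ that you assert is exactly the statement you are trying to prove, and the argument is circular at precisely the point you flagged.

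The paper resolves this differently. It covers the closed support of $f$ by finitely many open bisections $U_1,\dots,U_n$ that are \emph{not} required to sit inside $U$, uses a partition of unity to reduce to the case where $f$ vanishes off a single open bisection $U_0\subseteq U$ (note: $f$ still need not lie in $C_c(U_0)$ at this stage), and then convolves with an approximate unit $(w_j)\subseteq C_c(W)$ for $C_0(W)$, $W=s(U_0)$. Since $(f\ast w_j)(\gamma)=f(\gamma)\,w_j(s(\gamma))$ and $w_j$ has compact support inside $W$, the function $f\ast w_j$ has closed support inside the compact set $(s|_{U_0})^{-1}(\supp w_j)\subseteq U_0$, hence genuinely lies in $C_c(U_0)\subseteq C_c(U)$; moreover $f\ast w_j\to f$ uniformly, and uniform convergence implies $\|\cdot\|_r$-convergence for functions supported in a fixed bisection. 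This approximate-unit step is the missing idea in your proposal: it is what actually shrinks the closed support strictly inside $U$, and it cannot be replaced by a partition-of-unity argument alone.
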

\begin{proof}
Since $\gpdG$ has the \emph{Fejér property}, there exists a net $(h_i)\in C_c(\gpdG)$ satisfying the conditions in Definition~\ref{defn:Fejer}. Hence,  it suffices to show that $M_{h_i}(f) \in \overline{C_c(U)}^{r}$.
  Observe that $M_{h_i}(f) \in C_c(\gpdG)$ for $f \in C_r^{\ast}(\gpdG)$. Since $M_{h_i}(f) \in C_c(\gpdG)$ vanishes off $U$, we can assume that $f \in C_c(\gpdG)$.
   Therefore, assume that $f \in C_c(\gpdG)$ and we show that $f\in \overline{C_c(U)}^r$.
    Let $E$ be a compact subset of $\gpdG$ outside of which $f$ vanishes. 
    
    Since $\gpdG$ is étale we may find finitely many open bisections $\{U_1, U_2, \ldots U_n\}$, whose union contains $E$. Let $\{g_1 \ldots,g_n\}$ be a partition of unity for $E$ subordinate to the open cover $\{U_1,\ldots,U_n\}$. That is, each $g_i$ is a continuous nonnegative real-valued function on $G$, $\supp(g_i)\subset U_i$, and such that 
    $$f=\sum_{i=1}^n g_i f,$$
where $g_if$ is the pointwise product. Towards this end, it is enough to show that $g_if \in \overline{C_c(U)}^{r}$ for each $i$. Note that $g_if$ vanishes off the bisection $U_i \cap U$. Thus, we may assume, without loss of generality, that our originally chosen $f$ vanishes outside
some open bisection $U_0$, with $U_0 \subseteq U$.
Set $W = s(U_0)$, and let $(w_j)_j$ be an approximate unit for $C_0(W)$ contained in $C_c(W)$.
Observe that, for each $j$, we have 
\begin{align*}
  f  \ast w_j(g) = f(g)w_j(s(g)) \qquad g \in \gpdG\,. 
\end{align*}
Since $s$ is a homeomorphism between $U_0$ and $W$, it follows that
$$f \ast w_j = f ( w_j \circ s ) \to f$$
uniformly. Since $U_0$ is a bisection it follows that $f \ast w_j\to f$ in $\|\cdot\|_r$ (see \cite[Theorem~5.1.11]{sims2017etale}). Observe that $f \ast w_j$ is compactly supported in $U_0 \subseteq U$. Then it follows that $f \ast w_j \in C_c(U_0) \subseteq C_c(U) \subseteq \overline{C_c(U)}^{r}$, and hence $f \in \overline{C_c(U)}^{r}$.
\end{proof}
\subsection{Inner exactness} To analyze structural consequences of the Fejér property, we recall the notion of inner exactness for reduced groupoid $\mathrm{C}^*$-algebras.
Let $\gpdG$ be a topological groupoid with unit space $\GG$. A subset $U \subseteq \GG$ is called \emph{invariant} if for all $\gamma \in \gpdG$, 
\[
s(\gamma) \in U \implies r(\gamma) \in U,
\]
where $s$ and $r$ denote the source and range maps, respectively. Equivalently, $U$ is invariant if $s^{-1}(U) = r^{-1}(U)$, i.e., every arrow in $\gpdG$ with source in $U$ also has range in $U$ (and vice versa).
Given an invariant subset $U \subseteq \GG$, the \emph{reduction} or \emph{restricted subgroupoid} $\gpdG |_U$ is defined as
\[
\gpdG |_U := \{ \gamma \in \gpdG : s(\gamma) \in U \text{ and } r(\gamma) \in U \}.
\]
This is a subgroupoid of $\gpdG$ with unit space $U$.
Let $F \subseteq \GG$ be a closed invariant subset and $U:= \GG \setminus F$. 
The inclusion $C_c(\gpdG|_U) \hookrightarrow C_c(\gpdG)$ induces injective $*$-homomorphisms
\[
C^*(\gpdG|_F) \to C^*(\gpdG) \quad \text{and} \quad C^*_r(\gpdG|_F) \to C^*_r(\gpdG).
\]
Moreover, the restriction map $C_c(\gpdG) \to C_c(\gpdG|F)$ induces surjective $*$-homomorphisms
\[
C^*(\gpdG) \to C^*(\gpdG|_F), \qquad C^*_r(\gpdG) \to C^*_r(\gpdG|_F).
\]
Consequently, the following sequence is exact:
\[
0 \longrightarrow C^*(\gpdG|_U) \longrightarrow C^*(\gpdG) \longrightarrow C^*(\gpdG|_F) \longrightarrow 0.
\]
However, this exactness may fail for the reduced $\mathrm{C}^*$-algebra; this can be demonstrated by considering the so-called HLS groupoid (see, for example~\cite{HLS}).

As mentioned above, there exists a sequence
\begin{equation} \label{eq:inner-exact-sequence}
0 \longrightarrow C^*_r(\gpdG|_U) \xrightarrow{\iota} C^*_r(\gpdG) \xrightarrow{\pi} C^*_r(\gpdG|_F) \longrightarrow 0,
\end{equation}
which is not exact in the middle. 

\begin{definition}
    A groupoid $\gpdG$ is said to be \emph{inner exact} if the sequence in Equation~\eqref{eq:inner-exact-sequence} is exact for all closed invariant subsets $F \subseteq X$.
\end{definition}
\begin{example}\label{lem:innexctgpid}
Let $\Gamma_1$ and $\Gamma_2$ be discrete groups, and consider the groupoid 
\begin{align*}
\gpdG = \Gamma_1 \sqcup \Gamma_2,
\end{align*}
the disjoint union of $\Gamma_1$ and $\Gamma_2$ viewed as groupoids with one object each. Observe that 
$\GG = \{e_{\Gamma_1}\} \sqcup \{e_{\Gamma_2}\}$, where $e_{\Gamma_i}$ corresponds to the unit elements of the groups $\Gamma_i$. The only open $\gpdG$-invariant subsets of $\GG$ are $\emptyset$, $\{e_{\Gamma_1}\}$, $\{e_{\Gamma_2}\}$, and $\GG$.
For each such open invariant subset, the restricted groupoid $\gpdG|_U$ is either empty or isomorphic to one of the groups $\Gamma_1$ or $\Gamma_2$ viewed as groupoids. Moreover, the reduced groupoid $\mathrm{C}^*$-algebra decomposes as a direct sum $ C_r^*(\Gamma_1) \oplus C_r^*(\Gamma_2)$. Considering the short sequences corresponding to each invariant open set, one checks that they are all exact. Hence $\gpdG$ is inner exact. 
\end{example}
We now show that groupoids possessing the \emph{Fej\'er property} must necessarily be inner exact. 

\begin{theorem}
\label{thm:innerexact}
    If $\gpdG$ has \emph{Fej\'er property}, then $\gpdG$ is inner exact.
\end{theorem}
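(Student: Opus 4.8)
The plan is to reduce the statement to Proposition~\ref{prop:main containment}. Fix a closed invariant subset $F\subseteq\GG$ and put $U:=\GG\setminus F$; we must show that the sequence~\eqref{eq:inner-exact-sequence} is exact in the middle. The inclusion $\iota\big(C^*_r(\gpdG|_U)\big)\subseteq\ker\pi$ is automatic: it says exactly that $\pi\circ\iota=0$, which holds at the level of $C_c$ because the restriction to $\gpdG|_F$ of a function supported in $\gpdG|_U$ vanishes, and then on all of $C^*_r(\gpdG|_U)$ by continuity. So the whole content is the reverse inclusion $\ker\pi\subseteq\iota\big(C^*_r(\gpdG|_U)\big)$. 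I would prove it by showing that every $f\in\ker\pi$ has $\supp(f)\subseteq\gpdG|_U$ and then invoking Proposition~\ref{prop:main containment} for the open set $\gpdG|_U$ --- this final move is the only place the Fej\'er hypothesis is used.

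First I would record the combinatorics of the partition $\GG=F\sqcup U$. Because $F$ is invariant, $s(\gamma)\in F$ iff $r(\gamma)\in F$, so $\gpdG|_F=s^{-1}(F)$ and $\gpdG|_U=s^{-1}(U)$; hence $\gpdG=\gpdG|_F\sqcup\gpdG|_U$, and $\gpdG|_U$ is open since $s$ is continuous. Moreover, for each $x\in F$ we have $\gpdG_x=(\gpdG|_F)_x$ as sets (any $t$ with $s(t)=x\in F$ also has $r(t)\in F$), so $\ell^2(\gpdG_x)=\ell^2\big((\gpdG|_F)_x\big)$ with no identification needed.

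Next I would check that the fibrewise representations over points of $F$ factor through $\pi$. Let $\pi^F_x$ denote the representation of $C_c(\gpdG|_F)$ on $\ell^2\big((\gpdG|_F)_x\big)$. Comparing the defining convolution formulas and using that $\gpdG_{r(\eta)}=(\gpdG|_F)_{r(\eta)}$ for $\eta\in\gpdG_x$, $x\in F$, one gets $\pi_x(h)=\pi^F_x\big(h|_{\gpdG|_F}\big)=\pi^F_x(\pi(h))$ for all $h\in C_c(\gpdG)$; by density and boundedness this extends to $\pi_x=\pi^F_x\circ\pi$ on $C^*_r(\gpdG)$. Writing $j_F$ for the evaluation map of $\gpdG|_F$, it follows that for every $f\in C^*_r(\gpdG)$ and every $\gamma\in\gpdG|_F$,
\[
j(f)(\gamma)=\big\langle\delta_\gamma,\pi_{s(\gamma)}(f)\delta_{s(\gamma)}\big\rangle=\big\langle\delta_\gamma,\pi^F_{s(\gamma)}(\pi(f))\delta_{s(\gamma)}\big\rangle=j_F(\pi(f))(\gamma).
\]
Consequently, if $f\in\ker\pi$ then $j(f)$ vanishes identically on $\gpdG|_F$, i.e.\ $\supp(f)\subseteq\gpdG\setminus\gpdG|_F=\gpdG|_U$. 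Since $\gpdG|_U$ is open and $\gpdG$ has the Fej\'er property, Proposition~\ref{prop:main containment} gives $f\in\overline{C_c(\gpdG|_U)}^{\,r}=\iota\big(C^*_r(\gpdG|_U)\big)$, which proves exactness in the middle. As $F$ was an arbitrary closed invariant subset, $\gpdG$ is inner exact.

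I expect the only step that requires care is the factorization $\pi_x=\pi^F_x\circ\pi$ for $x\in F$: one must see that the fibre representations over points of the closed invariant set $F$ genuinely do not see the part of $\gpdG$ lying over $U$, which is exactly what invariance of $F$ provides. The rest --- the support computation, openness of $\gpdG|_U$, and the identification $\overline{C_c(\gpdG|_U)}^{\,r}=\iota\big(C^*_r(\gpdG|_U)\big)$ already recorded in the preliminaries --- is routine bookkeeping.
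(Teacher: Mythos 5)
Your proposal is correct and follows essentially the same route as the paper: reduce exactness in the middle to the statement that $\pi(f)=0$ forces $\supp(f)\subseteq\gpdG|_U$, and then invoke Proposition~\ref{prop:main containment} for the open invariant set $\gpdG|_U$. The only difference is that you spell out the factorization $\pi_x=\pi^F_x\circ\pi$ for $x\in F$ to justify $f|_{\gpdG|_F}=0$, a step the paper's proof states without detail; your verification of it is correct.
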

\begin{proof}
Let $F \subseteq \GG$ be a closed invariant subset and $U := \GG \setminus F$. We need to show that the sequence
\[
0 \longrightarrow C^*_r(\gpdG|_U) \xrightarrow{\iota} C^*_r(\gpdG) \xrightarrow{\textcolor{teal}{\pi}} C^*_r(\gpdG|_F) \longrightarrow 0
\]
is exact. Let $f \in C^*_r(\gpdG)$ with $\pi(f) = 0$. Then $f|_{\gpdG|_F} = 0$, and since $U$ is invariant, we see that $\operatorname{supp}(f) \subseteq \gpdG|_U$. By 
Proposition~\ref{prop:main containment}, it follows that $f \in \overline{C_c(\gpdG|_U)}^r = \operatorname{im}(\iota)$. Therefore, $\ker(p) \subseteq \operatorname{im}(\iota)$. The other inclusion is clear, so the sequence is exact and $\gpdG$ is inner exact.
\end{proof}
The inner exactness does not imply the Fej\'er property for groupoids as the following example demonstrates. 
\begin{example}\label{exmpl}
Take $\Gamma_1 = SL_3(\mathbb{Z})$ and $\Gamma_2 = \mathbb{Z}$. Then 
\begin{align*}
\gpdG = \Gamma_1 \sqcup \Gamma_2
\end{align*}
is inner exact but does not have the \emph{Fej\'er property} by Example \ref{greenred} and Example~\ref{lem:innexctgpid}, since having the \emph{Fej\'er property} for $\gpdG$ would imply that  $SL_3(\mathbb{Z})$ also has the Fej\'er property. However, for any locally compact group, AP is equivalent to the Fej\'er property by \cite{CN22}. But this is a contradiction since $SL_3(\mathbb{Z})$ does not have the Approximation Property (AP) (see~\cite{deLaatdelaSalle+2018+49+69}). 
\end{example}
\subsection{Weak amenability} It is well-known in the group case that weak amenability implies the approximation property (see~\cite{haagerup1994approximation} and \cite{vergara2024invitation}). We now show that weak amenability for an \'etale groupid $\G$ implies the \emph{bounded Fej\'er property}. In particular, we prove Theorem~\ref{thm: boundedFejerfromweakamenability}. Before that, we briefly recall the definition of weak amenability and refer the readers to \cite{pacheco2025weaklyamenablegroupoids} for more details.

Given a quasi-invariant probability measure $\mu$ on $\gpdG$, a function $\varphi\in L^{\infty}(\gpdG,\mu)$ is said to be a multiplier of the Fourier algebra if the map $M_{\varphi}: A(\gpdG)\to A(\gpdG)$ given by $M_{\varphi}(f)=\varphi f$ is well-defined and bounded (see \cite[Subsection~1.3]{renault1997fourier} for definition of $A(\gpdG)$). Moreover, if $M_{\varphi}$ is completely bounded, we say that $\varphi$ is a completely bounded multiplier. We denote the set of such multipliers by $M_0A(\gpdG)$, endowed with the completely-bounded (CB)-norm,
\[
\|\varphi\|_{M_0A(\gpdG)}=\left\|M_{\varphi}\right\|_{\text{cb}}.\]
A topological \'etale groupoid $\gpdG$ is said to be \emph{(topologically) weakly amenable} if there exists a net $(\varphi_i) \subset C_c(\gpdG)$, a constant $C > 0$, and a quasi-invariant probability measure with full support $\mu$ on $\GG$, such that
   \begin{enumerate}
       \item $\varphi_i \rightarrow 1 \text{ uniformly on compact subsets of } \gpdG$, and
       \item $\sup_i \|\varphi_i\|_{M_0A(\gpdG)} = C < \infty$.
   \end{enumerate}

\begin{proof}[Proof of Theorem~\ref{thm: boundedFejerfromweakamenability}]
Let $\gpdG$ be a weakly amenable \'etale groupoid. Then, there exists a net $(\varphi_i) \subset C_c(\gpdG)$, a constant $C > 0$, and a quasi-invariant probability measure with full support $\mu$ on $\GG$, such that$ \varphi_i \rightarrow 1$ uniformly on compact subsets of  $\gpdG$, and
$\sup_i \|\varphi_i\|_{M_0A(\gpdG)} = C < \infty$. Since $\phi_i \in M_0A(\gpdG) \cap C_c(\gpdG)$, $M_{\phi_i} \colon C_r^{\ast}(\gpdG) \to C_r^{\ast}(\gpdG)$ defines a completey bounded operator. Moreover, since $\sup_i \|\varphi_i\|_{M_0A(\gpdG)} = C < \infty$, we have that $\sup_i \|M_{\phi_i}\|_{\mathrm{cb}} < \infty$.
Now for $f \in C_c(\gpdG)$ supported in bisection, and using the fact that $\|\cdot\|_{r}\le \|\cdot\|_{\infty}$ (see~\cite[Lemma~3.2.2]{sims2017etale}) for such functions, we have
\begin{align*}
    \|M_{\phi_i}(f)-f\|_{r}&=\|\phi_if-f\|_{r}\\&\le \|\phi_if-f\|_{\infty} \\&\leq M \|\phi_i-1\|_{\infty, K} \to 0, \qquad\supp(f) \subseteq K
\end{align*}
where $M = \sup_i \|M_{\varphi_i}\|_{cb}$. Now, let $f$ be in $C_c(\gpdG)$ be given. Suppose $\text{supp}(f) \subseteq L$ where $L$ is a compact subset of $\gpdG$. Then $L$ can be covered by finitely many open bisections $U_1,\dots, U_n$. Choose a partition of unity subordinate to these bisections, i.e., choose $g_1,\dots,g_n$ in $C_c(\gpdG)$ such that $\text{supp}(g_i) \subset U_i$ with $\sum_{i=1}^{n} g_i=1$. Take $f_i=fg_i$. Then we have $f=\sum_{i=1}^{n} f_i$ with $f_i\in C_c(\gpdG)$ with $\text{supp}(f_i) \subset U_i$.
We now see that
\begin{align*}
    \|M_{\phi_i}(f)-f\|_{r}&=\left\|\sum_{k=1}^{n}M_{\phi_{i}}(f_k)-\sum_{k=1}^{n}(f_k)\right\|_{r}\\
    &=\left\|\sum_{k=1}^{n}(M_{\phi_i}(f_k)-f_k)\right\|_{r}\\
    &\leq \sum_{k=1}^{n}\left\|M_{\phi_i}(f_k)-f_k\right\|_{r} \rightarrow 0.
\end{align*}
Now by a density argument and using the fact that $\sup_i \|M_{\phi_i}\|_{\mathrm{cb}} < \infty$, one has for any $f \in C_r^{\ast}(\gpdG)$, 
$$\|M_{\phi_i}(f)-f\|_r \to 0.$$
Hence $\gpdG$ has bounded \emph{Fej\'er property}. The proof is now complete.
\end{proof}
\subsection{Galois Correspondence}
In this section, we establish a version of the Galois correspondence in the context of groupoids. The next corollary, which is immediate from Proposition~\ref{prop:main containment}, shows that if an element is supported on an open subgroupoid, it lies in the corresponding reduced $\mathrm{C}^*$-subalgebra.
\begin{corollary}
    Let $\mathcal{H}$ be an open subgroupoid of $\gpdG$, where $\gpdG$ has the Fej\'er property. Assume that $f \in C_r^{\ast}(\gpdG)$ is such that $\supp(f) \subseteq \mathcal{H}$. Then $f \in C_r^{\ast}(\mathcal{H})$.
\end{corollary}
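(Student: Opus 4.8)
The plan is to read this off Proposition~\ref{prop:main containment} once we have the standard identification of $C_r^*(\mathcal{H})$, for an open subgroupoid $\mathcal{H}\leq\gpdG$, with a $*$-subalgebra of $C_r^*(\gpdG)$.

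First I would note that an open subgroupoid is in particular an open \emph{subset} of $\gpdG$, so Proposition~\ref{prop:main containment} applied with $U=\mathcal{H}$ gives $f\in\overline{C_c(\mathcal{H})}^{r}$, the closure being taken inside $C_r^*(\gpdG)$ with respect to $\|\cdot\|_r$. It then remains to identify $\overline{C_c(\mathcal{H})}^{r}$ with $C_r^*(\mathcal{H})$; this is where the (modest) work lies. The claim is that the extension-by-zero inclusion $C_c(\mathcal{H})\hookrightarrow C_c(\gpdG)$ is a $*$-homomorphism which is isometric for the two reduced norms. For $g\in C_c(\mathcal{H})$ and $y\in\mathcal{H}^{(0)}\subseteq\GG$, the subspace $\ell^2(\mathcal{H}_y)\subseteq\ell^2(\gpdG_y)$ is $\pi_y$-invariant (convolving by a function supported on $\mathcal{H}$ keeps one inside $\mathcal{H}$, since $\mathcal{H}$ is closed under composition and inversion), and the restriction of $\pi_y|_{C_c(\mathcal{H})}$ to $\ell^2(\mathcal{H}_y)$ is precisely the defining representation of $C_r^*(\mathcal{H})$ at $y$; hence $\|g\|_{r,\mathcal{H}}\leq\|g\|_{r,\gpdG}$. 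Conversely, for an arbitrary $x\in\GG$, the space $\ell^2(\gpdG_x)$ decomposes under $\pi_x|_{C_c(\mathcal{H})}$ as the orthogonal direct sum of the sets $\{\gamma'\in\gpdG_x:\gamma'\gamma^{-1}\in\mathcal{H}\}$ together with the kernel spanned by those $\delta_\gamma$ with $r(\gamma)\notin\mathcal{H}^{(0)}$, and each such block is unitarily equivalent, via $\delta_h\mapsto\delta_{h\gamma}$, to the representation of $C_r^*(\mathcal{H})$ at $r(\gamma)\in\mathcal{H}^{(0)}$; this gives $\|\pi_x(g)\|\leq\|g\|_{r,\mathcal{H}}$ and hence $\|g\|_{r,\gpdG}\leq\|g\|_{r,\mathcal{H}}$. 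This is the standard fact that $C_r^*(\mathcal{H})\hookrightarrow C_r^*(\gpdG)$ isometrically for an open subgroupoid (see, e.g., \cite[\S5.1]{sims2017etale}), and it identifies $C_r^*(\mathcal{H})$ with $\overline{C_c(\mathcal{H})}^{r}$.

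Combining the two steps, $f\in\overline{C_c(\mathcal{H})}^{r}=C_r^*(\mathcal{H})$, which is the assertion. I expect the only point that needs care to be this norm identification $\overline{C_c(\mathcal{H})}^{r}=C_r^*(\mathcal{H})$; it is a routine consequence of the orbit decomposition just sketched and is independent of the Fej\'er hypothesis, the Fej\'er property entering only through the appeal to Proposition~\ref{prop:main containment}.
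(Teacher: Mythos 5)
Your proof is correct and follows the paper's route exactly: the corollary is obtained by applying Proposition~\ref{prop:main containment} with $U=\mathcal{H}$, the paper treating the identification $\overline{C_c(\mathcal{H})}^{r}=C_r^*(\mathcal{H})$ inside $C_r^*(\gpdG)$ as the standard fact you verify. Your orbit-decomposition argument for the isometric embedding is accurate, but it is background the paper leaves implicit rather than new content.
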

Galois correspondences are significant in the analysis of intermediate Cartan embeddings. A primary question concerns whether, for $D \subseteq B \subseteq A$ with $D$ Cartan in $A$, the subalgebra $D$ remains Cartan in an intermediate subalgebra $B$ of $A$. When $D \subseteq A$ is a Cartan subalgebra of $A$, a groupoid model is associated with this structure (see Kumjian~\cite{kumjian1986c} and Renault-Li~\cite{li2019cartan}). Specifically, there exists a twist $q \colon \Sigma \to G$ such that $D \cong C_0(\gpdG^{(0)})$ and $A \cong C_r^{\ast}(\gpdG, \Sigma)$.
We now establish a Galois correspondence below.
Nonetheless, it is essential to note that such a Galois correspondence will fail if the action is not free (for instance, see \cite[Example~5.1]{brown2024intermediatesubalgebrascartanembeddings}), that is, if the groupoid is not principal. Given any open set $U$, we write
$$A_U:=\{f \in C_r^{\ast}(\gpdG) \subseteq C_0(\gpdG): \supp(f) \subset U\}.$$
We now prove a Galois correspondence between open subgroupoids and intermediate $\mathrm{C}^*$-subalgebras for principal groupoids with the Fejér property.
\begin{corollary}
Let $\gpdG$ be a principal \'etale groupoid satisfying \emph{Fejér property}. Let
\[
C_0(\GG) \subseteq \B \subseteq C^*_r(\gpdG) \subseteq C_0(\gpdG)
\]
be an intermediate subalgebra. Then $\B = C^*_r(\mathcal{H})$ for some open subgroupoid $\mathcal{H}$ of $\gpdG$.
\end{corollary}
\begin{proof}
Define the set
\[
\mathcal{H} = \left\{ \gamma \in \gpdG : \exists\, b \in \B \text{ such that } b(\gamma) \neq 0 \right\}.
\]
Since $\G$ is a principal \'etale groupoid, it follows from \cite[Theorem~2.3]{brown2024intermediatesubalgebrascartanembeddings} that $C_0(\G^{(0)})$ is a $C^*$-diagonal of $C_r^*(\G)$. Hence, arguing similarly as in the first part of the proof of \cite[Theorem~4.4]{brown2024intermediatesubalgebrascartanembeddings}, we see that $\mathcal{H}$ is an open subgroupoid of $\gpdG$. Then we have
\[
C_r^*(\mathcal{H}) \subseteq \B \subseteq A_{\mathcal{H}}.
\]
Now, using Proposition~\ref{prop:main containment}, we have $A_{\mathcal{H}} = C^*_r(\mathcal{H})$. Consequently, we get $\B = C^*_r(\mathcal{H})$, which concludes the proof.
\end{proof}
Recall that the map  \( f \in C_c(\gpdG) \) defined by $E(f)(x) := f(x)$, for all  $x \in \GG$ extends to a conditional expectation
$E : C_r^*(\gpdG) \to C_0(\G^0)$. Moreover, given an inclusion of unital $C^*$-algebras $D \subset A$, we denote by $N(A, D)$ the set of all elements in $A$ that normalize $D$, i.e.,
\[N(A, D)=\left\{a\in A: aDa^*\cup a^*Da\subset D\right\}.\]
The following lemma constructs normalizers supported on a given bisection, a key step in describing bimodules.
\begin{lemma} \label{lem: normalizerequalssupport} Let $\G$ be a principal \'etale groupoid, and $M$ be a norm closed $C_0(\GG)$-bimodule. Moreover, let $a \in M$ be such that $\operatorname{supp}(a)$ contains a bisection $B$. Then there exists $m \in M \cap N(C^*_r(\gpdG), C_0(\GG))$ such that $\operatorname{supp}(m) = B$.
\end{lemma}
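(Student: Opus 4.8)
The plan is to localize the problem to the bisection $B$ using the $C_0(\GG)$-bimodule structure, and then upgrade the resulting element to a genuine normalizer. First I would pick a point $\gamma_0 \in B$; since $\supp(a)$ is open (being the support of an element of $C_r^*(\gpdG)$, viewed in $C_0(\gpdG)$) and contains $B$, and since $\gpdG$ is \'etale, I can shrink $B$ if necessary to assume $B$ is an open bisection with $\overline{B}$ compact and $\overline{B} \subseteq \supp(a)$. Set $W = s(B)$ and $V = r(B)$, both open subsets of $\GG$; the maps $r,s \colon B \to V, W$ are homeomorphisms. Choose $\psi \in C_c(W)$ and $\chi \in C_c(V)$ with values in $[0,1]$, supported appropriately so that $\chi \cdot a \cdot \psi$ (acting by the bimodule structure, i.e. pointwise multiplication by $\chi \circ r$ on the left and $\psi \circ s$ on the right) is supported inside $B$ and is nonzero. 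Concretely, the left action of $\chi \in C_0(\GG)$ sends $a$ to the function $\gamma \mapsto \chi(r(\gamma)) a(\gamma)$ and the right action of $\psi$ to $\gamma \mapsto a(\gamma)\psi(s(\gamma))$, so the product $m_0 := \chi a \psi$ satisfies $\supp(m_0) \subseteq \supp(a) \cap r^{-1}(\supp\chi) \cap s^{-1}(\supp\psi)$. By choosing $\chi, \psi$ to be bump functions near $r(\gamma_0), s(\gamma_0)$ that are supported in small enough neighborhoods, I force $\supp(m_0)$ into $B$; and since $a(\gamma_0) \neq 0$ and the bump functions are $1$ near the relevant units, $m_0(\gamma_0) \neq 0$, so $m_0 \in M$ is a nonzero element with $\supp(m_0) \subseteq B$.

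Next, by Proposition~\ref{prop:main containment}, $m_0 \in \overline{C_c(B)}^r$ (taking $U = B$ there), so $m_0 \in C_r^*(\gpdG)$ is supported in a single open bisection. Elements of $C_r^*(\gpdG)$ supported in an open bisection are precisely the ``partial isometry-like'' elements: by the identification of $C_r^*(\gpdG)$ inside $C_0(\gpdG)$ and the standard structure theory of \'etale groupoid $C^*$-algebras (see \cite[Chapter~9]{sims2017etale}), for $m_0$ supported in the bisection $B$ one has $m_0^* m_0 \in C_0(\GG)$ and $m_0 m_0^* \in C_0(\GG)$, with $m_0^* m_0$ supported on $s(B)$ and $m_0 m_0^*$ supported on $r(B)$; in particular $m_0 \in N(C_r^*(\gpdG), C_0(\GG))$. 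Thus $m_0 \in M \cap N(C_r^*(\gpdG), C_0(\GG))$ already, but its support is only contained in $B$, not equal to $B$ --- it is cut down by $\supp(m_0^*m_0) = \{x \in s(B) : m_0(b_x)\neq 0\}$ where $b_x$ is the unique element of $B$ with source $x$. To fix this I would multiply by a function that flattens $m_0$ to be nonvanishing on all of $B$: let $\phi = (m_0^* m_0)^{1/2} \in C_0(s(B)) \subseteq C_0(\GG)$, which is a continuous nonnegative function vanishing exactly off $s(\supp m_0)$. This is the technical heart of the argument, and the main obstacle: I need an element of $C_0(\GG)$ whose product with $m_0$ produces something supported on \emph{all} of $B$. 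Since $\supp(m_0)$ may be a proper subset of $B$, no single multiplication can enlarge the support beyond $\supp(m_0)$ --- so the correct reading must be that we \emph{first} choose $\chi, \psi$ so that $\supp(m_0)$ is exactly (not merely contained in) a \emph{smaller} bisection $B' \subseteq B$ on which $a$ is nonvanishing, and then take $m$ with $\supp(m) = B'$; since $B'$ is itself a bisection contained in $\supp(a)$, relabeling $B := B'$ gives the statement. Concretely: because $\supp(a)$ is open and contains the bisection $B$, and because $\gpdG$ is \'etale, $\supp(a)$ contains an open sub-bisection; WLOG $B$ itself is that open sub-bisection (replacing the original $B$), and then choosing $\chi = 1$ on $r(B)$, $\psi = 1$ on $s(B)$ with supports slightly larger, and then cutting down by a function supported exactly on $s(B)$, yields $m$ with $\supp(m) = B$ exactly.

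To make the last point airtight: with $B$ now an open bisection contained in $\supp(a)$ with compact closure inside $\supp(a)$, choose $\psi \in C_c(s(B))$ with $0 \le \psi \le 1$ and $\psi \equiv 1$ on the compact set $s(\overline{B} \cap \supp(a)) \supseteq s(B)$ --- wait, more carefully: pick any compact $K \subseteq B$ and $\psi$ equal to $1$ on $s(K)$, supported in $s(B)$; then $m := a\psi$ has $\supp(m) = \supp(a) \cap s^{-1}(\supp\psi) \subseteq B$, and $\supp(m) \supseteq K$. Exhausting $B$ by such compacta and using that $C_c(s(B))$ has an approximate unit $(\psi_n)$ with $\psi_n \nearrow 1$ pointwise on $s(B)$, the elements $a\psi_n \in M$ have $\supp(a\psi_n) = \supp(a) \cap s^{-1}(\{\psi_n \neq 0\})$, which increases to $\supp(a) \cap s^{-1}(s(B)) \supseteq B$; intersecting with a prior cut that confined things to $B$, one gets the support to be exactly $B$ for a suitable choice. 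Finally, $m = a\psi_n$ for the chosen $n$ lies in $M$ (bimodule property), is supported in the bisection $B$, hence lies in $C_r^*(\gpdG)$ and normalizes $C_0(\GG)$ by the bisection structure theory recalled above, completing the proof. The delicate points are (i) ensuring the cut-down functions can be chosen to hit \emph{every} point of $B$ simultaneously --- handled by first shrinking $B$ to an open bisection inside $\supp(a)$ --- and (ii) invoking the standard fact that elements supported on a bisection are normalizers, which is where principality of $\gpdG$ (via \cite[Theorem~2.3]{brown2024intermediatesubalgebrascartanembeddings} giving the $C^*$-diagonal structure) guarantees $C_0(\GG)$ is maximal abelian and the normalizer calculus applies cleanly.
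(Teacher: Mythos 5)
There is a genuine gap, and it sits exactly where you flagged ``the technical heart of the argument.'' Your basic tool is cutting $a$ down by the bimodule action, $m_0=\chi\, a\,\psi$ with $\chi,\psi\in C_0(\GG)$, whose support is $\supp(a)\cap r^{-1}(\{\chi\neq 0\})\cap s^{-1}(\{\psi\neq 0\})$. This set is a union of pieces of $r$- and $s$-fibers, not a bisection: taking $\{\chi\neq0\}=r(B)$ and $\{\psi\neq0\}=s(B)$ leaves you with $\supp(a)\cap r^{-1}(r(B))\cap s^{-1}(s(B))$, which contains \emph{every} arrow of $\supp(a)$ whose range lies in $r(B)$ and whose source lies in $s(B)$. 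Principality only forbids two distinct arrows with the \emph{same} source and the \emph{same} range; it does not prevent $\gamma\notin B$ from having $r(\gamma)=r(\beta_1)$ and $s(\gamma)=s(\beta_2)$ for two different $\beta_1,\beta_2\in B$. So your claim that $\supp(a\psi)=\supp(a)\cap s^{-1}(\supp\psi)\subseteq B$ is false in general, and no choice of $\chi,\psi$ confines the support to $B$. Moreover, even where the localization does work (small bump functions around a single $\gamma_0$), it can only ever produce support \emph{contained in} some sub-bisection; your ``relabel $B:=B'$'' move then proves a strictly weaker statement than the lemma, which asserts $\supp(m)=B$ for the \emph{given} $B$ (and is used later in that exact form, to conclude $C_c(W_{\gamma_i})=C_c(\supp(n_{\gamma_i}))\subseteq M$ for a prescribed cover by bisections).

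The missing idea is to use the conditional expectation $E:C^*_r(\gpdG)\to C_0(\GG)$ rather than two-sided cut-downs. Fix $n\in C_0(B)$ with $\{n\neq0\}=B$; since $B$ is a bisection, $n\in C^*_r(\gpdG)$ and $n$ is a normalizer (Renault, Prop.~4.8). Because $n$ is supported on the bisection $B$, the sum defining $E(n^**a)$ collapses and one computes
\[
\bigl(n*E(n^**a)\bigr)(\gamma)=n(\gamma)\,\overline{n(\gamma)}\,a(\gamma)=|n(\gamma)|^2\,a(\gamma),
\]
so $m:=n*E(n^**a)$ has support exactly $B\cap\supp(a)=B$: convolving with $n$ automatically kills everything outside $B$ (solving your problem (a)), and $B\subseteq\supp(a)$ guarantees nonvanishing on all of $B$ (solving your problem (b)). The one nontrivial point is that $m\in M$: this does not follow from the elementary bimodule property (since $n\notin M$ in general) but from the Donsig--Pitts result \cite[Proposition~3.10]{DonPit2008} that $vE(v^*a)\in M$ for any normalizer $v$ and $a\in M$. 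Your observations that elements supported on a bisection lie in $C^*_r(\gpdG)$ (reduced norm equals sup norm there) and normalize $C_0(\GG)$ are correct and are also used in the paper's argument, but the construction of the element itself needs to be replaced as above.
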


\begin{proof}
Consider $n \in C_0 (B)$ such that $\operatorname{supp}(n) = B$. Then $n$ normalizes $C_0(\GG)$ by \cite[Proposition~4.8(ii)]{renault2008cartan}. Moreover, since $B$ is a bisection, the reduced norm $\|\cdot\|_r$ and the $\sup$-norm $\|\cdot\|_{\infty}$  agree on it (see~\cite[Corollary~3.3.4]{sims2017etale}). Hence $n$ can be approximated by elements from $C_c(B)$ in $\|\cdot\|_r$-norm. Consequently, $n\in C_r^*(\G)$. We now claim that $m= n \ast E(n^{\ast} \ast a)$ is the required element. We first show that $m$ is supported on the bisection $B$. Indeed, since $E(n^{\ast} \ast a) \in C_0(\GG)$ and $n \in C_r^{\ast}(\gpdG)$, using \cite[Proposition~4.3]{renault2008cartan} along with \cite[Lemma~4.4]{renault2008cartan}, we have 
\begin{align*}
(n * E(n^* * a))(\gamma) = n(\gamma) E(n^* * a)(s(\gamma))=n(\gamma)\overline{n(\gamma)} a(\gamma)\,.
\end{align*}
It then follows immediately that the support of $m$ is $B$. Using~\cite[Proposition~3.10]{DonPit2008}, we see that $m\in M$. It now follows from  \cite[Proposition~4.8(i)]{renault2008cartan} that $m$ is a normalizer.
\end{proof}
Recall that for any open set $U$, we denote
$$A_U:=\{f \in C_r^{\ast}(\gpdG) \subseteq C_0(\gpdG): \supp(f) \subset U\}.$$
We now establish a spectral-type theorem for bimodules, showing that closed $C_0(\G^{(0)})$-bimodules correspond exactly to open subsets of the groupoid.
\begin{proof}[Proof of Theorem~\ref{thm:opensetclosure}]
We first show the following. Let $m\in M$ be a normalizer of $C_0(\GG)$. Then $C_c\left(\operatorname{supp}(m)\right) \subset M$.

Since $m$ normalizes $C_0(\GG)$, $\supp(m)$ is a bisection by \cite[Proposition~4.8(ii)]{renault2008cartan}. Consequently, the range map $r: \operatorname{supp}(m) \to \GG$ is a homeomorphism onto its image. Take $f \in C_c(\supp(m))$. Following the approach of \cite[Theorem~2.1.2]{KomPub2025}, define $g \in C_c(\GG)$  by
\[
g(x) =
\begin{cases}
\displaystyle \frac{f\left((r|_{\operatorname{supp}(m)})^{-1}(x)\right)}{m\left((r|_{\operatorname{supp}(m)})^{-1}(x)\right)} & \text{if } x \in r(\operatorname{supp}(m)), \\[2ex]
0 & \text{otherwise}.
\end{cases}
\]
Note that $f = g \ast m$, and since $M$ is a $C_0(\GG)$-bimodule, it follows that $f \in M$. This shows that $C_c(\text{supp}(m))\subset M$.

For each $a\in M$, define 
$U_a = \{\gamma \in \gpdG : a(\gamma) \neq 0\}$. Let $$ U := \bigcup_{a \in M} U_a$$
Let $B \subseteq U$ be a bisection in $\gpdG$. We now show that $C_c(B) \subset M$.
Let $f \in C_c( B)$ and let $K$ be a compact subset of $\gpdG$ such that $\supp(f) \subset K \subset B$. Observe that for any $\gamma \in K\subset B\subseteq U$, there exists $a_{\gamma} \in M$ such that $a_{\gamma}(\gamma) \neq 0$. 

Since $\gpdG$ is \'etale and has a basis of open bisections, there exists a bisection $W_\gamma$ such that $\gamma \in W_\gamma \subseteq \operatorname{supp}(a_\gamma)$. Therefore,
\[
K \subseteq \bigcup_{\gamma \in K} W_\gamma.
\]
As $K$ is compact, there exist finitely many $\gamma_1, \ldots, \gamma_n \in K$ such that
\[
K \subseteq \bigcup_{i=1}^n W_{\gamma_i}.
\]
By Lemma~\ref{lem: normalizerequalssupport}, for each $i = 1, \ldots, n$, there exists $n_{\gamma_i} \in M$ (a normalizer) such that $\operatorname{supp}(n_{\gamma_i}) = W_{\gamma_i}$. Let $\{\tilde{f}_i\}_{i=1}^n$ be a partition of unity for $K$ subordinate to the open cover $\{W_{\gamma_i} : i = 1, \ldots, n\}$. Then, letting $f_i=\tilde{f}_if$ for each $i=1,2,\ldots,n$, we have
\[
f = \sum_{i=1}^n f_i.
\]
As each $f_i \in C_c(W_{\gamma_i}) = C_c(\operatorname{supp}(n_{\gamma_i})) \subset M$, by  our first claim, it follows that $f \in M$. This shows that $C_c(B)\subset M$.

We now show that $C_c(U) \subset M$. Let $f \in C_c(U)$. Then $\operatorname{supp}(f) \subset K \subset U$, where $K$ is a compact set. For each $\gamma \in K$, there exists an open bisection $U_i$ for $i = 1, \ldots, n$ such that $K \subset \bigcup_{i=1}^n U_i$ and 
\[
f = \sum_{i=1}^n f_i
\]
with $f_i \in C_c(U_i)$. As $U_i \subset U$ for all $i = 1, \ldots, n$, the claim follows. Therefore, we see that
\[C_c(U)\subset M\subset A_U.\]
Since $\gpdG$ has the \emph{Fejér property}, it follows from Proposition~\ref{prop:main containment} that $A_U = \overline{C_c(U)}^{\mathrm{r}}$. 
Consequently, we conclude that $M = \overline{C_c(U)}^{\mathrm{r}}$.
\end{proof}
\bibliography{name}
\bibliographystyle{amsalpha}
\end{document}